\documentclass[11pt]{article}

\usepackage[T1]{fontenc}
\usepackage[utf8]{inputenc}
\usepackage{lmodern}
\usepackage[a4paper,margin=1in]{geometry}

\usepackage{amsmath,amssymb,amsthm,mathtools}
\usepackage{enumitem}
\usepackage{graphicx}
\usepackage{booktabs}
\usepackage{hyperref}
\usepackage[nameinlink]{cleveref}
\usepackage{subcaption}
\usepackage{tikz}

\usepackage{listings}
\usepackage{xcolor}

\hypersetup{
  colorlinks=true,
  linkcolor=blue,
  citecolor=blue,
  urlcolor=blue,
  pdftitle={Trihexagonal Magic Figures: A High-Constraint-Density Triangular Combinatorial Design and the Possibility of a Phase Transition},
  pdfauthor={Donghwi Park}
}

\theoremstyle{plain}
\newtheorem{theorem}{Theorem}[section]
\newtheorem{proposition}[theorem]{Proposition}
\newtheorem{lemma}[theorem]{Lemma}

\newtheorem{conjecture}[theorem]{Conjecture}

\theoremstyle{definition}

\newtheorem{remark}[theorem]{Remark}

\title{\bfseries Trihexagonal Magic Figures: A High-Constraint-Density Triangular Combinatorial Design and the Possibility of a Phase Transition}

\author{
  Donghwi Park\thanks{}\\
  Interdisciplinary Program in Computational Science and Technology\\
  Seoul National University\\
  Seoul, 08826, Republic of Korea\\
  \texttt{dhpark@snu.ac.kr}
}

\date{} 

\begin{document}

\maketitle

\begin{abstract}
We introduce a new class of magic figures defined on a finite triangular region of the trihexagonal tiling. The vertices of the region are labeled with the integers $1,2,\dots,3n(n+1)/2$, each used exactly once. The labeling is called \textbf{trihexagonal magic figure of order $n$} if all triangular faces have the same vertex-sum and all hexagonal faces have the same vertex-sum, with the latter equal to twice the former.

We note three immediate structural features. First, the ratio between the number of independent constraints and the number of variables approaches $1$ as $n\to\infty$, indicating that the system is asymptotically tight. Second, a necessary parity condition arises: since the labeling uses the integers $1,2,\dots,3n(n+1)/2$, the total number of vertices must be odd, which holds if and only if $n \equiv 1$ or $2 \pmod{4}$. Third, the vertex set forms a perfectly regular triangular boundary, so the configuration has an exact polygonal symmetry without boundary irregularities.

\end{abstract}

\section{Introduction}

Magic squares and their variants have long attracted the interest of mathematicians. Beyond squares, many authors have studied other magic figures by making magic labeling on graphs, polygons, stars, hexagons, and triangular arrays. 

Magic figures are a broad class of combinatorial structures generalizing the idea of magic squares, in which labels are assigned to elements of a given shape so that prescribed sums (or analogous quantities) are constant throughout the configuration; this common value is called the magic constant.

In this article we propose a new object: a magic labeling induced by the \textbf{trihexagonal tiling} on a finite triangular region.

Our trihexagonal magic figure is an instance of a combinatorial design problem in which the constraint density increases with $n$ and approaches $1$.

Combined with the presence of parity constraints, this leads to a highly 
restricted system, making the construction of admissible labelings 
significantly more difficult. 

This behavior stands in contrast to many classical magic figures, 
for which constructions are known or widely believed to exist for all orders.

To the best of our knowledge, this type of combinatorial design problem, 
characterized by constraint density approaching $1$, has not been 
explicitly formulated in the context of magic figures, except for the classical square case, where explicit constructions can be found for all orders.

Assuming a random labeling and that the satisfaction of each constraint 
is determined independently, the increasing constraint density suggests 
that admissible labelings become rapidly rare and are expected to disappear 
for sufficiently large $n$. 

In contrast, if solutions persist as $n$ increases, this indicates the presence 
of a highly symmetric underlying structure. In this sense, the problem can be 
interpreted as a combinatorial model in which one can analyze the existence 
of a phase transition in the set of constraint-satisfying configurations.

We analyze the symmetric properties arising in trihexagonal magic figures, 
determine all solutions for $n \le 5$, and show that, under the heuristic 
assumption of random labeling with independently satisfied constraints, 
the expected number of solutions vanishes as $n$ increases.

In many constraint satisfaction problems, increasing constraint density 
is associated with a sharp transition from a satisfiable regime to an 
unsatisfiable one.

However, this heuristic does not necessarily apply to problems in which 
the constraint density approaches $1$ as the order increases.

Moreover, by comparison with the magic quadrangle (A355256),\cite{OEISA355256} 
and most-perfect magic square\cite{OllerenshawBree1998}
we show that having asymptotic constraint density equal to $1$ is not a sufficient 
condition for the disappearance of solutions. 

These examples suggest that the decisive factor is not the asymptotic 
constraint density itself, but the structural compatibility among the 
constraints.

Understanding which combinatorial structures admit solutions for 
infinitely many orders, and which exhibit a genuine satisfiability 
threshold beyond which no solutions exist, requires a systematic study 
of the algebraic and geometric relations among their constraints.

Therefore, a systematic classification of the structural differences 
between families admitting solutions for infinitely many orders and those 
for which solutions eventually disappear, especially when the asymptotic 
constraint density approaches $1$, becomes a necessary direction for 
future research.

\section{Definition of the figure}

\begin{figure}[ht]
    \centering

    \begin{subfigure}{0.14\textwidth}
        \centering
        \includegraphics[width=\linewidth]{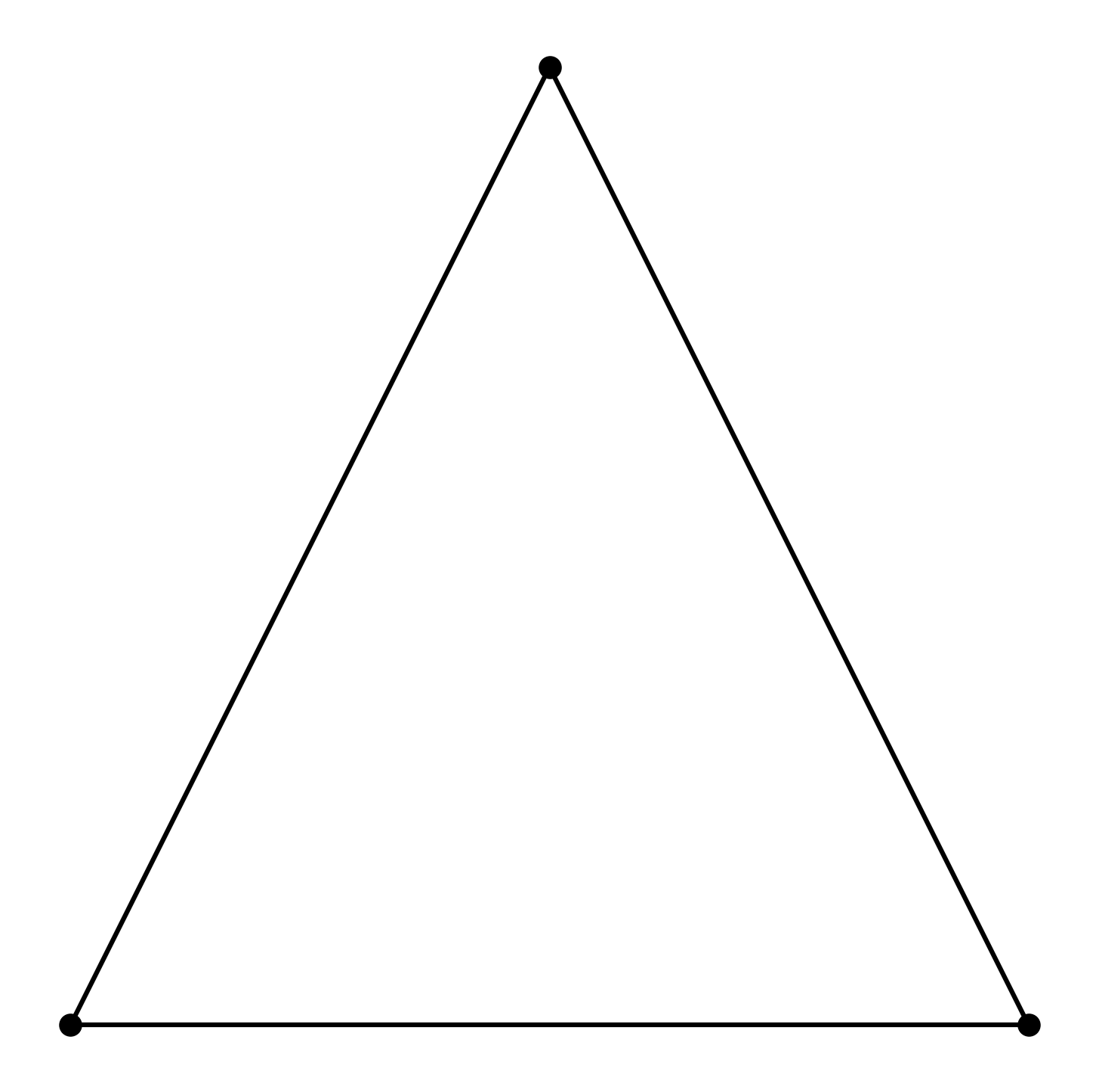}
        \caption{$n=1$}
    \end{subfigure}
    \hfill
    \begin{subfigure}{0.16\textwidth}
        \centering
        \includegraphics[width=\linewidth]{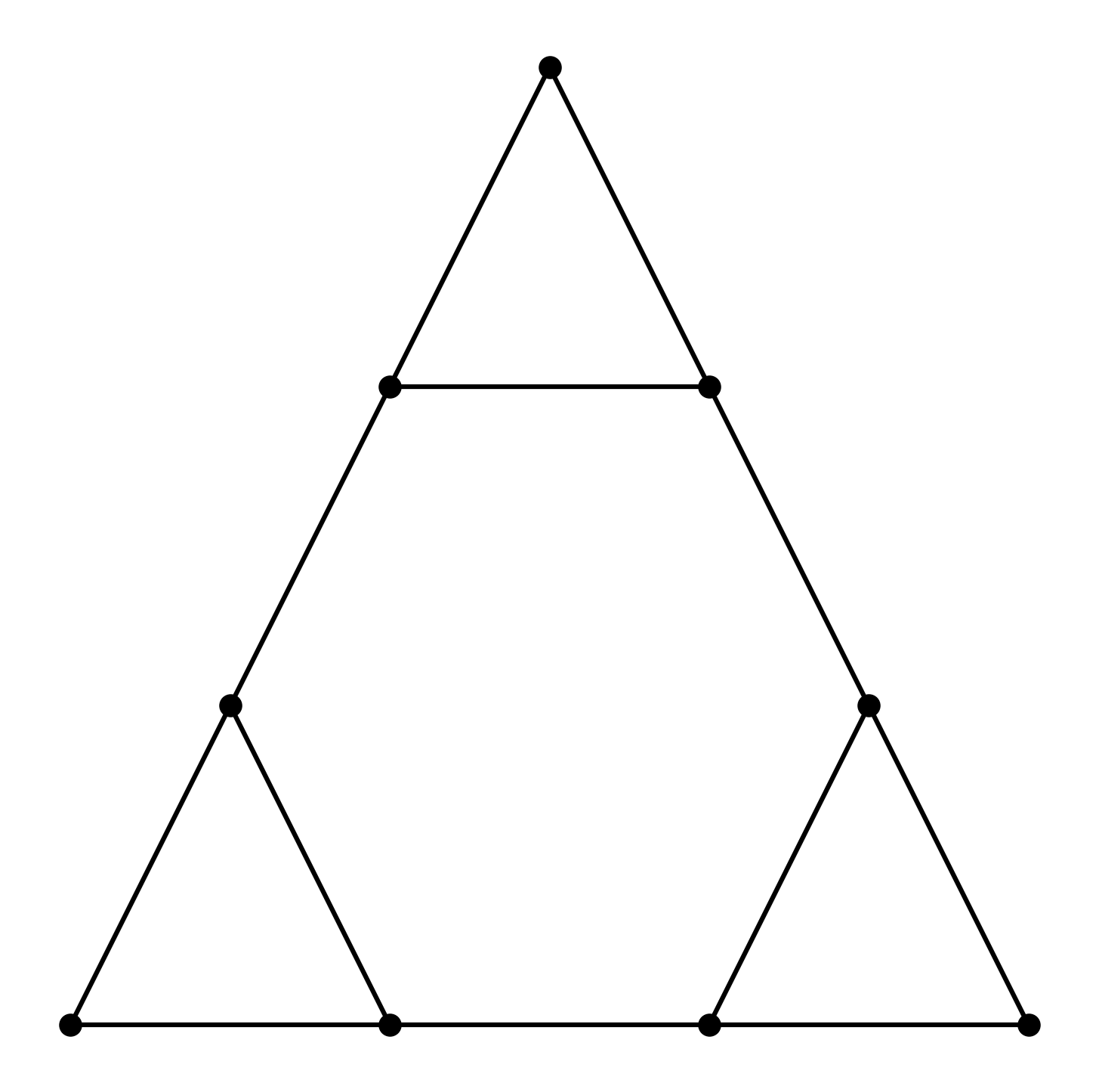}
        \caption{$n=2$}
    \end{subfigure}
    \hfill
    \begin{subfigure}{0.18\textwidth}
        \centering
        \includegraphics[width=\linewidth]{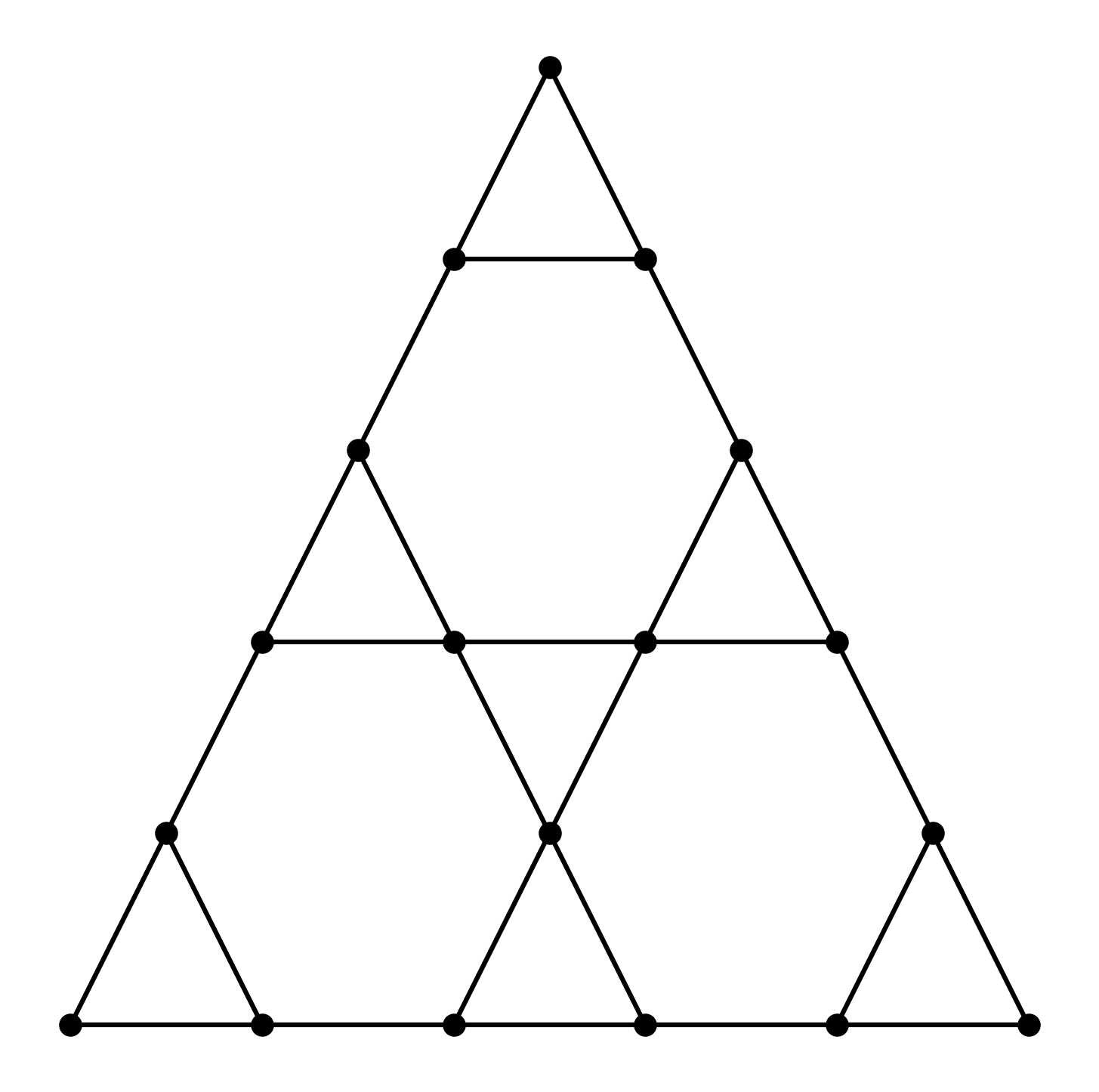}
        \caption{$n=3$}
    \end{subfigure}
    \hfill
    \begin{subfigure}{0.21\textwidth}
        \centering
        \includegraphics[width=\linewidth]{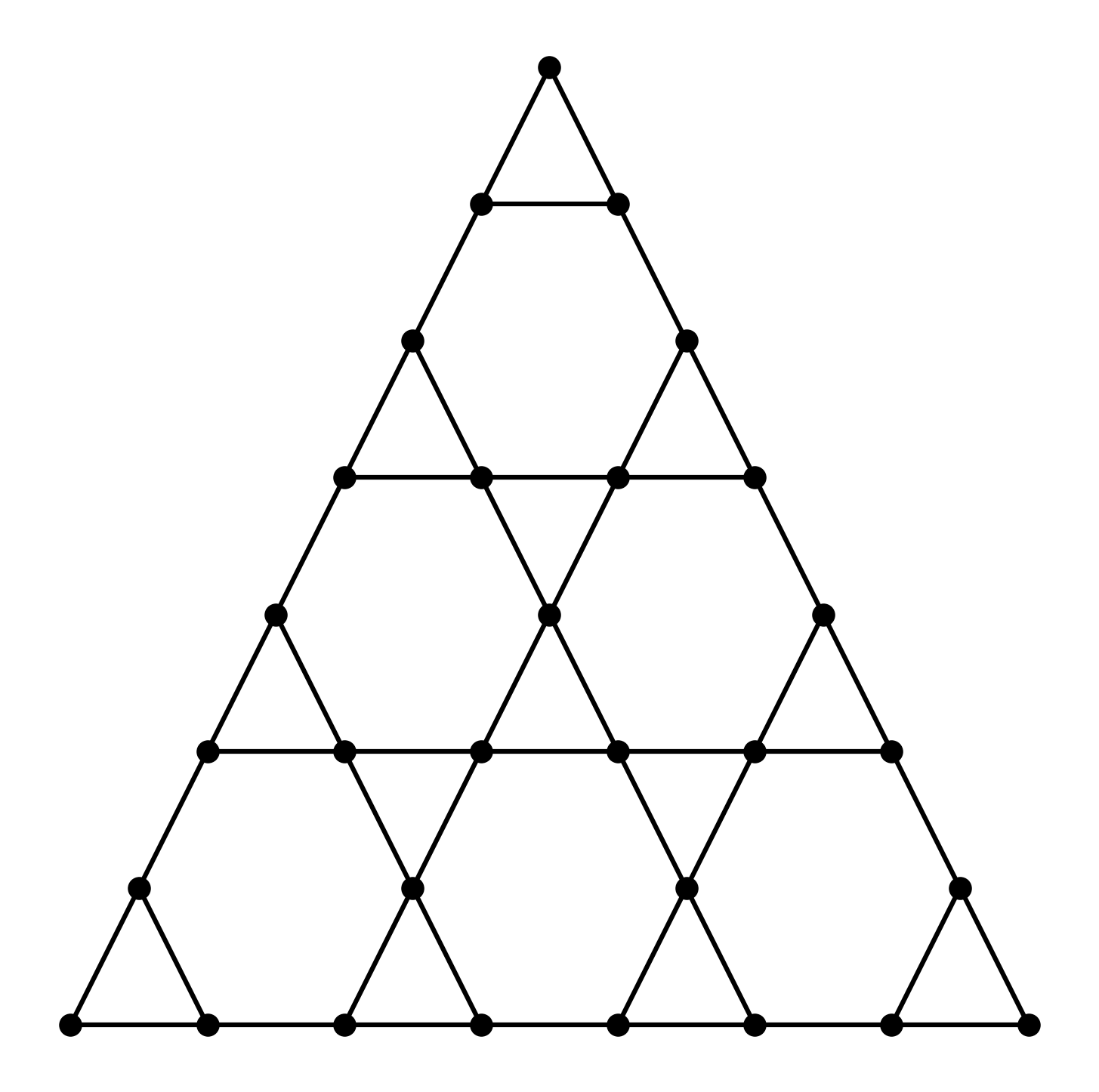}
        \caption{$n=4$}
    \end{subfigure}
    \hfill
    \begin{subfigure}{0.24\textwidth}
        \centering
        \includegraphics[width=\linewidth]{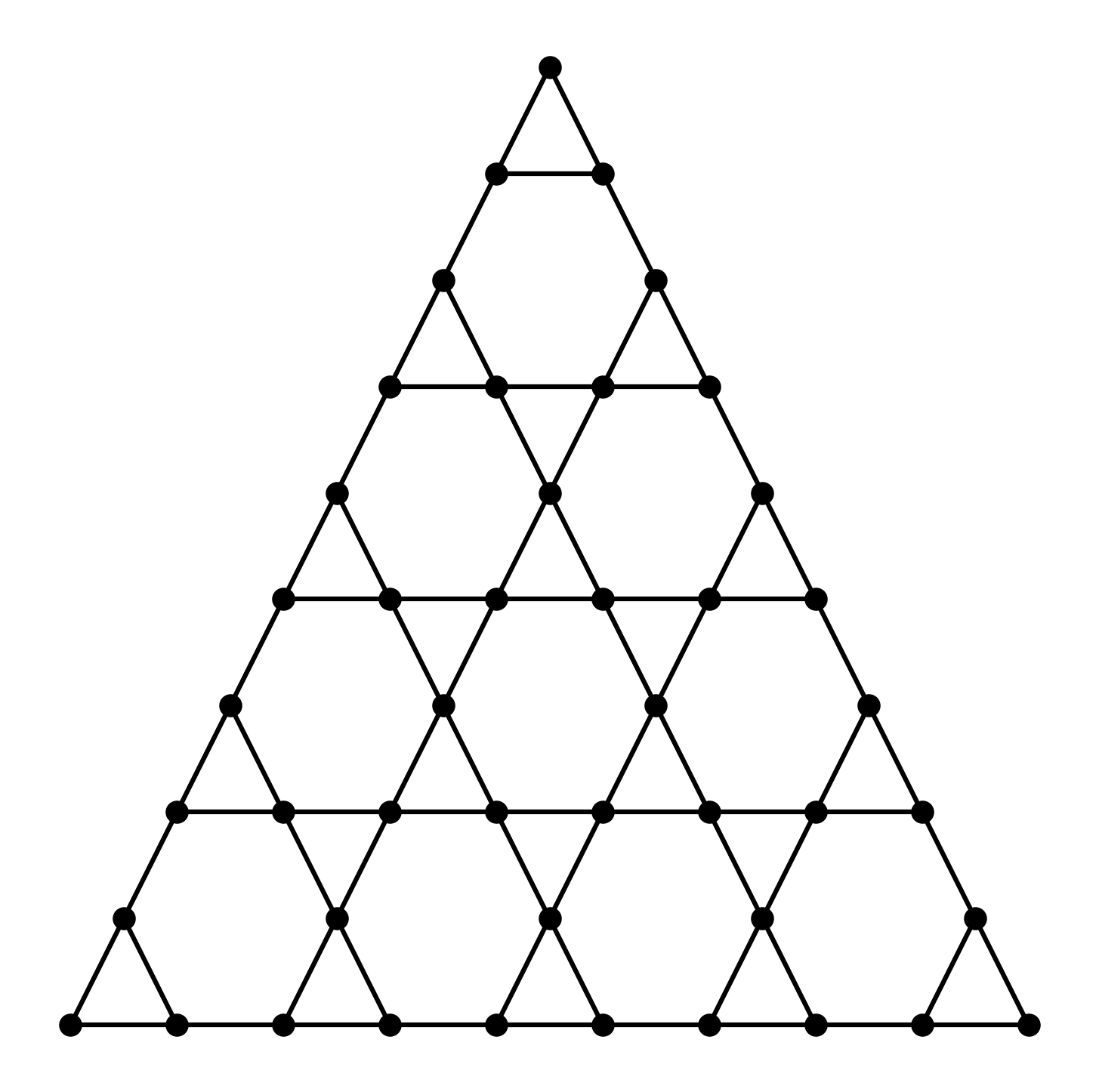}
        \caption{$n=5$}
    \end{subfigure}

    \caption{Construction order from $n=1$ to $n=5$}
    \label{fig:trihexagonal_all}
\end{figure}

An order-n trihexagonal magic figure is obtained by extracting the n-th smallest triangular region from the trihexagonal tiling (so that each side contains 2n vertices) and assigning numbers to its vertices.

Consider an equilateral triangle whose sides are subdivided into \(2n-1\) equal segments.
From each vertex, select the two adjacent sides and mark points at odd distances \(1,3,5,\dots,2n-1\) from the vertex.
For each such point, draw a line parallel to the side opposite the chosen vertex.

Performing this construction independently at all three vertices, the intersection of the resulting families of parallel lines produces precisely the trihexagonal region of order \(n\).

A trihexagonal magic figure is then obtained by assigning numbers to the intersection points where two of these three families of parallel lines meet (including those on the boundary of the triangle), so that all prescribed magic sums are equal.

Therefore, in an order-\(n\) trihexagonal region, there are exactly \(n\) upward-pointing unit triangles along each side of the outer boundary, and every vertex belongs to exactly one such upward-pointing triangle.

Equivalently, an order-\(n\) trihexagonal region can be obtained from a triangular arrangement of \(\binom{2n+1}{2}\) points with \(2n\) vertices on each side, by removing \(\binom{n}{2}\)   vertices, each of which corresponds to a hexagon in the trihexagonal tiling.

This yields
\[
|V_n| = \frac{3n(n+1)}{2}.
\]

at the number of vertices.

A \textbf{trihexagonal magic figure of order $n$} is a bijection
\[
\varphi:V_n\to\{1,2,\dots,\tfrac{3n(n+1)}{2}\}
\]
such that

\begin{itemize}
    \item Every triangular face has the same sum $T_n$ of its three vertex labels;
    \item Every hexagonal face has the same sum $H_n$ of its six vertex labels.
\end{itemize}
Because each hexagon is naturally decomposable into two alternating triples compatible with the surrounding triangular structure, the intended magic condition is
\[
H_n=2T_n.
\]

Two such labelings are considered equivalent when one is carried to the other by a symmetry of the outer triangular region, that is, by an element of the dihedral group $D_3$ generated by rotations and reflections.

\subsection{Why trihexagonal?}
We focus on the trihexagonal tiling because it enables the formulation of magic figure problems with high constraint density, while at the same time admitting finite regions whose boundary forms an exact regular polygon, yielding a particularly natural and aesthetically coherent configuration.

To motivate this choice, we begin by examining the asymptotic constraint density as the number of tiles tends to infinity.

This can be derived directly from the angle condition for Euclidean tilings. 
If regular polygons with side-sharing meet at an interior vertex, and their 
numbers of sides are $n_1,\dots,n_k$, then
\[
\sum_{i=1}^k \left(\pi - \frac{2\pi}{n_i}\right) = 2\pi,
\]
hence
\[
\sum_{i=1}^k \frac{1}{n_i} = \frac{k-2}{2}.
\]
Since each $n_i$-gon contributes one constraint distributed over its $n_i$ vertices, 
the average number of constraints per vertex is exactly
\[
\sum_{i=1}^k \frac{1}{n_i} = \frac{k-2}{2}.
\]
Therefore, tilings with three polygons meeting at each vertex have asymptotic 
constraint density $1/2$, those with four have density $1$, and those with five 
or more have density greater than $1$.

Moreover, when five or more regular polygons meet at a vertex, it follows from the angle condition that at least three of them must be triangles, so that triangles constitute a majority of the faces around that vertex. In such a configuration, the tiling necessarily contains pairs of adjacent triangles sharing an edge, which is incompatible with the present type of magic labeling. Consequently, the maximal feasible constraint density occurs when exactly four polygons meet at each vertex, and among uniform tilings this situation is realized only by the square tiling, trihexagonal tiling and rhombitrihexagonal tiling.

Among the eleven uniform tilings of the plane, only a few admit the property that finite subsets of tiles can be selected so that their boundary forms an exact regular polygon, and this remains possible as the size of the polygon grows. In particular, this condition is satisfied by the triangular tiling, the square tiling, and the trihexagonal tiling.

However, as discussed above, tilings in which five or more polygons meet at each vertex are incompatible with the present type of magic labeling, and thus the triangular tiling is excluded.

We therefore focus on triangular regions of the trihexagonal tiling. Although regular polygonal regions can be extracted in both triangular and hexagonal shapes in this tiling, the hexagonal case leads to configurations in which the parity of the number of vertices obstructs the required averaging condition, making the magic constraint unattainable. This justifies our restriction to triangular regions.

\section{Preliminaries}

We define the number of naive constraints as the number of conditions imposed by the definition of a magic figure, namely the requirement that all designated faces or lines have equal magic sums.

For example, a magic square of order $n$ has $2n+2$ naive constraints, corresponding to the $n$ row sums, $n$ column sums, and two diagonal sums.

\begin{proposition}\label{prop:constraints}
An order-$n$ trihexagonal magic figure contains $\binom{n+1}{2}$ upright-triangular faces, $\binom{n}{2}$ hexagonal faces, and $\binom{n-1}{2}$ inverted triangular faces,
\end{proposition}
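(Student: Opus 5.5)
We place the order-$n$ region in coordinates inherited from the ambient triangular point arrangement described above: a triangular array with $2n$ points on each side, organised into rows $r=0,1,\dots,2n-1$ from the apex, with row $r$ containing $r+1$ points. We use the description already given in the paper: the $\binom{2n+1}{2}$ points of this array, with $\binom{n}{2}$ of them deleted (one per hexagon centre), leave exactly $|V_n|=3n(n+1)/2$ vertices. The proof then proceeds by identifying each face type with a simple index set and counting.

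\textbf{Upright triangles.} By construction, every vertex lies in exactly one upward-pointing unit triangle, and these triangles are disjoint. Hence their number is $|V_n|/3 = n(n+1)/2 = \binom{n+1}{2}$. As a cross-check we also locate them directly: they sit on a triangular grid with $n$ per side, i.e.\ rows $1,2,\dots,n$ of upright triangles, giving $1+2+\dots+n=\binom{n+1}{2}$.

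\textbf{Hexagons.} Each deleted point of the array is the centre of exactly one hexagonal face, and each hexagon arises this way. So the number of hexagons equals the number of deleted points, which is $\binom{n}{2}$ by the equivalent construction. Concretely, the deleted points form a triangular sub-lattice with $n-1$ points per side, namely the points strictly inside the big triangle of upright triangles with one at each "hole".

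\textbf{Inverted triangles.} In the trihexagonal tiling, between three mutually adjacent upright triangles (in a triangular configuration) there is either a hexagon or an inverted triangle, and these alternate in the same pattern as up- and down-triangles in a triangular lattice. Treating the upright triangles as the points of a triangular lattice with $n$ per side, the interior "up cells" of that lattice number $\binom{n}{2}$ and the "down cells" number $\binom{n-1}{2}$. One of these classes is the hexagons and the other the inverted triangles, and the count for hexagons identifies which is which.

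To confirm, we verify by Euler's formula on the planar graph of the region, $V-E+F=1$ for interior faces. Each vertex has degree $4$ except the boundary vertices, which gives $E$ explicitly, and then $\binom{n+1}{2}+\binom{n}{2}+\binom{n-1}{2}$ must match.

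\textbf{Main obstacle.} The main obstacle is the inverted-triangle identification, i.e.\ showing rigorously which class of cells of the upright-triangle lattice are hexagons and which are inverted triangles. We handle it with the Euler check: once the two other counts are established, $F$ forces the inverted count to be $\binom{n-1}{2}$.
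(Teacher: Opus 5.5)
Your argument is correct. For the upright triangles and hexagons it follows the paper's proof: triangular arrays with $n$ and $n-1$ points per side. (The paper's text says ``side length $n$'' for the hexagon array, a slip you avoid.) You also note that the upright count follows from the vertex partition, $|V_n|/3=\binom{n+1}{2}$.

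The genuine difference is the inverted-triangle count. The paper places the inverted triangles ``between adjacent hexagons'', i.e.\ as cells of the $(n-1)$-per-side hexagon array. You instead view them as the down-cells of the $n$-per-side array of upright triangles, and then let Euler's formula force the number; this removes any need to prove your alternation claim. You left that computation unfinished, but it closes:
\begin{itemize}
\item The three corners have degree $2$, the other $6n-6$ boundary vertices have degree $3$, and all remaining vertices have degree $4$.
\item Hence $2E=4|V_n|-6n$, i.e.\ $E=3n^2$.
\item There are then $E-|V_n|+1=\frac{3n^2-3n+2}{2}$ bounded faces. All of them are tiles, since each side of the outer triangle alternates between $n$ upright-triangle edges and $n-1$ hexagon edges.
\item Subtracting $\binom{n+1}{2}+\binom{n}{2}=n^2$ leaves $\binom{n-1}{2}$.
\end{itemize}

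The paper's route is quicker but rests entirely on reading the identifications off the picture. Yours costs a degree count but makes the inverted-triangle count a consequence of the other two. It also confirms the total $C_n=\frac{3n^2-3n+2}{2}$ that the paper uses later in its heuristic estimate.
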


\begin{proof}
The upright-triangular faces correspond to the unit triangles in a triangular arrangement of side length $n$, which are counted by $\binom{n+1}{2}$. 

The hexagonal faces correspond to the interior positions where six triangles meet, forming a smaller triangular arrangement of side length $n$, and are therefore counted by $\binom{n}{2}$.

The inverted triangular faces are located between adjacent hexagons arranged in a triangular pattern, and are therefore also counted by $\binom{n-1}{2}$.
\end{proof}

\begin{proposition}
For an order-$n$ trihexagonal magic figure, the common triangular magic sum is equal to three times the average of all vertex labels.
\end{proposition}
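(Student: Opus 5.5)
The plan is a double-counting argument over the upright triangular faces. The key structural fact, stated in the construction of the region, is that every vertex of $V_n$ belongs to exactly one upward-pointing unit triangle. So the upright triangles partition $V_n$ into disjoint triples. By Proposition~\ref{prop:constraints} there are $\binom{n+1}{2}$ of them, and indeed
\[
3\binom{n+1}{2}=\frac{3n(n+1)}{2}=|V_n|,
\]
which confirms that the triples cover every vertex exactly once.

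First, sum the labels over all upright triangles. Each triangle contributes $T_n$, so the total is $\binom{n+1}{2}T_n$. Because the upright triangles partition $V_n$, this same total is the sum of all labels:
\[
\sum_{k=1}^{3n(n+1)/2} k=\frac{1}{2}\cdot\frac{3n(n+1)}{2}\left(\frac{3n(n+1)}{2}+1\right).
\]

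Second, divide by $\binom{n+1}{2}=|V_n|/3$. This gives
\[
T_n=\frac{3}{|V_n|}\sum_{v\in V_n}\varphi(v),
\]
which is three times the average label. Explicitly, $T_n=\tfrac{3}{2}\bigl(\tfrac{3n(n+1)}{2}+1\bigr)$. The average label is $(|V_n|+1)/2$, so this is an integer only when $|V_n|$ is odd, which links to the parity remark in the abstract.

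The only step needing care is justifying that the upright triangles really partition $V_n$, with no vertex shared between two of them and no vertex left over. I would argue this directly from the description of $V_n$ as a triangular arrangement of $\binom{2n+1}{2}$ points with the $\binom{n}{2}$ hexagon centers removed. In the trihexagonal tiling, each vertex has degree $4$ and is incident to exactly two triangles, one upright and one inverted. Since the two triangles at a vertex alternate orientation, each vertex lies in exactly one upright triangle. For boundary vertices, the missing inverted triangle does not change this, because the construction keeps whole upright triangles along each side. The vertex count then confirms the partition.

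The inverted triangles play no role in this argument, since they do not cover the boundary corners.
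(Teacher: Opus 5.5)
Your proposal is correct and is essentially the paper's proof. Both arguments sum the triangle condition over the $\binom{n+1}{2}$ pairwise disjoint upright triangles, which partition $V_n$, to get $\binom{n+1}{2}T_n=\sum_{k=1}^{|V_n|}k$, and then divide by $\binom{n+1}{2}=|V_n|/3$. The one addition is that you justify the partition property from the $3.6.3.6$ vertex structure (one upright and one inverted triangle at each vertex), whereas the paper simply takes it from the construction; you also correctly write $3\binom{n+1}{2}=|V_n|$ where the paper's displayed identity mistakenly has $3\binom{n}{2}$.
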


\begin{proof}
Consider only the $\binom{n+1}{2}$ upright triangular faces. These faces are pairwise disjoint, and their union contains all vertices of the figure exactly once. Hence the sum of their magic sums is equal to the sum of all vertex labels:
\[
\binom{n+1}{2} T_n = \sum_{k=1}^{3n(n+1)/2} k.
\]
Since
\[
\sum_{k=1}^{3n(n+1)/2} k
=
\frac{3n(n+1)}{2}\cdot \frac{\frac{3n(n+1)}{2}+1}{2},
\]
and
\[
3\binom{n}{2}=\frac{3n(n+1)}{2},
\]
it follows that
\[
T_n
=
3\cdot \frac{\frac{3n(n+1)}{2}+1}{2}.
\]
In other words, the common triangular magic sum is three times the average of all vertex labels.
\end{proof}

\begin{proposition}
A necessary condition for the existence of an order-$n$ trihexagonal magic figure is that
$\binom{n+1}{2}$ or the number of vertices $3 \binom{n+1}{2}$
be odd. Equivalently, this holds only when
\[
n \equiv 1 \text{ or } 2 \pmod{4}.
\]
\end{proposition}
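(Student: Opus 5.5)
The plan is to derive the condition from integrality of the triangular magic sum, using the preceding proposition. That proposition gives
\[
T_n = \frac{3}{2}\left(\frac{3n(n+1)}{2}+1\right) = \frac{3}{2}\bigl(|V_n|+1\bigr),
\]
with $|V_n| = 3\binom{n+1}{2}$. Any labeling uses integer labels, so $T_n$ must be an integer. Since $\gcd(3,2)=1$, this forces $|V_n|+1$ to be even, that is, $|V_n|$ odd.

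I would state the equivalence of the two formulations in the proposition directly. Because $|V_n| = 3\binom{n+1}{2}$ and $3$ is odd, $|V_n|$ is odd exactly when $\binom{n+1}{2}$ is odd. I would also note that the hexagonal condition $H_n = 2T_n$ adds no further parity restriction. Once $T_n$ is an integer, $H_n$ is automatically an integer, so the triangle sums are the only source of the obstruction.

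The remaining step is the elementary number theory: determine when $\binom{n+1}{2} = n(n+1)/2$ is odd. I would split on $n \bmod 4$.
\begin{itemize}
\item If $n \equiv 0 \pmod 4$, then $n/2$ is even, so $n(n+1)/2 = (n/2)(n+1)$ is even.
\item If $n \equiv 3 \pmod 4$, then $(n+1)/2$ is even, so $n(n+1)/2 = n\cdot\frac{n+1}{2}$ is even.
\item If $n \equiv 1 \pmod 4$, then $(n+1)/2$ is odd, so $n \cdot \frac{n+1}{2}$ is a product of two odd numbers and hence odd.
\item If $n \equiv 2 \pmod 4$, then $n/2$ is odd, so $(n/2)(n+1)$ is a product of two odd numbers and hence odd.
\end{itemize}
Therefore $\binom{n+1}{2}$ is odd if and only if $n \equiv 1$ or $2 \pmod 4$, which completes the argument.

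I expect no step to be a real obstacle. The only point needing care is to read the statement's ``or'' as the equivalence of the two parity conditions, which is justified by the factor $3$ being odd. I would also make explicit that the argument gives only necessity, not existence.
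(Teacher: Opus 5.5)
Your proposal is correct and takes the same approach as the paper. Integrality of $T_n = \tfrac{3}{2}(|V_n|+1)$ forces $|V_n|$ odd, which is equivalent to $\binom{n+1}{2}$ odd since $3$ is odd, and your explicit use of $\gcd(3,2)=1$ tightens the paper's terse step ``$T_n$ is an integer only if the average is an integer''. Your four-case split is just an unpacked version of the paper's observation that $n(n+1)\equiv 0$ or $2 \pmod 4$.
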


\begin{proof}
By the previous proposition, the common triangular magic sum is equal to three times the average of all vertex labels. Since the vertex labels are
\[
1,2,\dots,\frac{3n(n+1)}{2},
\]
their average is
\[
\frac{\frac{3n(n+1)}{2}+1}{2}.
\]
Hence the triangular magic sum is an integer only if this average is an integer, that is, only if
\[
\frac{3n(n+1)}{2}
\]
is odd. Since $3$ is odd, this is equivalent to requiring that
\[
\frac{n(n+1)}{2}=\binom{n+1}{2}
\]
be odd.

It remains to determine when $\binom{n+1}{2}$ is odd. This occurs exactly when
\[
n \equiv 1 \text{ or } 2 \pmod{4}.
\]
Indeed, if $n\equiv 0$ or $3\pmod{4}$, then $n(n+1)$ is divisible by $4$, so $\frac{n(n+1)}{2}$ is even. On the other hand, if $n\equiv 1$ or $2\pmod{4}$, then $n(n+1)\equiv 2\pmod{4}$, so $\frac{n(n+1)}{2}$ is odd.
\end{proof}

\section{Local structure, propagation, and invariants}

We now turn to the actual structure of the trihexagonal configuration and the symmetries exhibited by Figure~\ref{fig:hexagon_propagation}. 
Our strategy is to first analyze the local behavior of the smallest building blocks. 
In particular, we begin by establishing the fundamental properties of unit triangles and unit hexagons (that is, those of side length $1$), which will serve as the basis for all subsequent generalizations.

\begin{proposition}[Local determinacy and propagation]
\
\begin{enumerate}
    \item Any triangle is uniquely determined by two of its vertices.
    \item Any hexagon is uniquely determined by five of its vertices.
    \item If four consecutive vertices of a hexagon are known, then the vertex of the opposite triangle is uniquely determined.
\end{enumerate}
\end{proposition}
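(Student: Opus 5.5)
Parts (1) and (2) are immediate from the fact that every face sum is fixed in advance, so I would dispatch them first. By the earlier proposition on the magic sum, every triangular face has the prescribed sum $T_n$, which depends only on $n$. For a triangle with vertices $a,b,c$, the relation $a+b+c=T_n$ gives $c=T_n-a-b$, so any two labels determine the third. For a hexagon with vertices $v_1,\dots,v_6$, the relation $\sum v_i=H_n=2T_n$ likewise gives the missing label as $2T_n$ minus the sum of the other five. In both cases the determined value is forced; whether it is admissible (an unused integer in the range) is a separate question and not part of the claim.

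For (3) I would first fix the local geometry. In the trihexagonal tiling each edge of a hexagon is shared with exactly one triangle, and around a hexagon the six adjacent triangles alternate between upright and inverted. Label the hexagon's vertices cyclically $v_1,\dots,v_6$. I would interpret ``the opposite triangle'' as follows: given four consecutive vertices $v_1,v_2,v_3,v_4$, the two unknown vertices $v_5,v_6$ span the hexagon edge opposite the middle edge $v_2v_3$. That edge $v_5v_6$ is shared with a triangle whose third vertex $w$ lies outside the hexagon, and (3) asserts that $w$ is determined.

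The computation then takes one line. The hexagon condition gives
\[
v_5+v_6 = 2T_n-(v_1+v_2+v_3+v_4).
\]
The triangle $\{v_5,v_6,w\}$ gives $w=T_n-(v_5+v_6)$, and therefore
\[
w = (v_1+v_2+v_3+v_4)-T_n,
\]
even though $v_5$ and $v_6$ individually remain unknown. Using the alternating decomposition $H_n=2T_n$ noted in the definition, one can phrase this more symmetrically: the triangles on edges $v_1v_2$ and $v_3v_4$ together with the triangle on $v_5v_6$ cover the six hexagon vertices once each, plus three outer vertices. That viewpoint gives an alternative check but is not needed.

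The main obstacle is not algebraic. It is making ``opposite triangle'' precise and confirming that the triangle on edge $v_5v_6$ actually exists inside the order-$n$ region. For a hexagon on the boundary of the triangular region, that triangle might be cut off. I would handle this by interpreting (3) as applying whenever that triangle is a face of the region. Alternatively, I would check from the construction of the region that every hexagon in $V_n$ is interior, so that all six adjacent triangles are present. The count of faces in Proposition~\ref{prop:constraints}, with $\binom{n}{2}$ hexagons inside a triangle of $\binom{n+1}{2}$ upright triangles, suggests that every hexagon is surrounded by faces of the region, and I would confirm this from the line-family description.
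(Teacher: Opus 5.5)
Your argument is correct and is the paper's own: (1) and (2) are read off the fixed sums $T_n$ and $H_n=2T_n$. For (3), the paper likewise observes that $v_5+v_6$ is forced although $v_5,v_6$ individually are not, and this fixes $w=(v_1+v_2+v_3+v_4)-T_n$ on the triangle through edge $v_5v_6$; this is exactly the rule $a_{00}=a_{22}+a_{23}+a_{13}+a_{02}-69$ used in the $n=5$ code.

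Keep your first resolution of the boundary issue and drop the proposed alternative, because it is false. Each side of the region carries $n-1$ hexagon edges; for $n=2$ the only hexagon has three boundary edges and there are no inverted triangles at all. So (3) applies exactly when the triangle on the opposite edge is a face of the region, which is automatic when that triangle is upright.
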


\begin{figure}[ht]
\centering

\setlength{\arraycolsep}{6pt}
\renewcommand{\arraystretch}{1.2}
\setlength{\fboxsep}{4pt}
\setlength{\fboxrule}{0.5pt}

\fbox{$
\begin{array}{ccccccc}
 &   &   & !      &   &   &   \\
 &   & \triangle &        & \triangle &   &   \\
 & \bigcirc &   &    &   & \bigcirc &   \\
 ! &   & \bigcirc &  & \bigcirc &   & !
\end{array}
$}

\caption{
Local determinacy and propagation.
(1) Two vertices of a triangle (marked by $\bigcirc$) determine the third vertex (marked by $!$).
(2) Four consecutive vertices of a hexagon (marked by $\bigcirc$) together with one adjacent vertex (marked by $\triangle$) determine the remaining adjacent vertex (marked by $\triangle$).
(3) Consequently, four consecutive vertices of a hexagon determine the vertex of the opposite triangle (marked by $!$).
}

\label{fig:hexagon_propagation}
\end{figure}

\begin{proof}
Parts (i) and (ii) follow directly from the defining constraints of the configuration. 
For (iii), although the remaining two vertices of the hexagon are not individually determined, the triangle containing both of them is uniquely determined, which fixes its third vertex.
\end{proof}

\begin{remark}[Lattice polygons and structural properties]
We now introduce general lattice polygons in the trihexagonal configuration and record several structural properties that will be used without proof.

In the trihexagonal figure, one can define polygons such as triangles, hexagons, and parallelograms whose sides lie along lattice directions (i.e., parallel to the edges of the outer triangular region).

\begin{itemize}
    \item[(1)] In an order-$n$ configuration, the largest triangle has side length $2n-1$ (measured in lattice steps), and contains all vertices of the configuration.

    \item[(2)] The configuration contains, for each $1 \le m \le n$, a trihexagonal subfigure of order $m$. 
    Such a subfigure corresponds to a triangular region of side length $2m-1$. 
    In particular, all triangles arising in this way have odd side length.

    \item[(3)] Hexagons can also be defined in the same manner, with all six sides lying along lattice directions. 
    For any such hexagon, all side lengths are necessarily odd.

    \item[(4)] Any parallelogram whose sides lie along lattice directions has interior angles $60^\circ$ and $120^\circ$. 
    Moreover, the side lengths of such parallelograms are necessarily even.
\end{itemize}

These properties follow from the combinatorial structure of the trihexagonal lattice and will be used without further justification.
\end{remark}

\begin{theorem}[Global rigidity of lattice polygons]
In a trihexagonal magic figure, the local triangle constraints imply global conservation laws on all lattice polygons:
\begin{itemize}
    \item Every lattice triangle has the same vertex-sum; (Lemma~\ref{lem:triangle-sum-invariance})
    \item Every lattice hexagon has the same vertex-sum equal to twice the triangle sum; (Proposition ~\ref{prop:triangle-sum-invariance})
    \item Every lattice parallelogram satisfies equality of opposite-vertex sums. (Therorem~\ref{thm:parallelogram-invariance})
\end{itemize}
The following lemma and propositions establish these statements successively.
\end{theorem}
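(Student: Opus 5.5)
The plan is to reduce everything to a linear ``potential'' description of the labeling and then read off the three conservation laws as identities about that potential.

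\textbf{Step 1 (local rule on a rhombus).} Take two unit triangles that share a vertex $v$ and are adjacent across a hexagon: an upright and an inverted triangle meeting at $v$. Each has sum $T_n$. The hexagon condition $H_n=2T_n$ then says that the sum of the three alternate vertices of a hexagon equals $T_n$ minus nothing: the two triangles attached to alternate hexagon vertices sum to the hexagon sum. From this I will derive the basic relation: for every lattice parallelogram of side length $2$, the sums of the two pairs of opposite vertices are equal. This uses part (iii) of the Local determinacy proposition, where four consecutive hexagon vertices fix the opposite triangle vertex, written as an additive identity.

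\textbf{Step 2 (parallelogram invariance).} Opposite-sum equality is additive under gluing parallelograms along a common side. Concretely, if $a+c=b+d$ on the cell $abcd$ and $b+e=d'+d$ on the neighbouring cell, then summing gives the identity on the union. So by induction on the even side lengths from Remark (4), every lattice parallelogram satisfies the third bullet. Equivalently, the labeling restricted to each parity class is an affine function of lattice coordinates. I will prove it in that form: $\varphi(x)=\alpha(x)+c_{\epsilon}$, with $\alpha$ linear and $\epsilon$ one of the three vertex types of the upright triangles.

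\textbf{Step 3 (triangles and hexagons).} A lattice triangle of odd side length $2m-1$ has its three corners in the three distinct positions of upright unit triangles. Its vertex sum is therefore $\alpha(\text{sum of corners})+c_1+c_2+c_3$. Translating the triangle changes the corner sum by three times the translation, so I need the linear part to vanish on that combination. The shortcut is to slide one corner along an edge by an even step. The parallelogram law from Step 2 shows that the change at one corner is compensated by the other two, which gives translation invariance. Scaling invariance follows by comparing with the unit upright triangle it contains at a corner, using a parallelogram that connects the corners. So every lattice triangle sums to $T_n$, which is the first bullet.

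For a lattice hexagon with odd sides, I will split it into two ``alternate vertex'' triples. Each triple is the set of corners of a lattice triangle (extend alternate sides), so each sums to $T_n$, and the hexagon sums to $2T_n$. This is the second bullet.

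\textbf{Main obstacle.} The expected difficulty is Step 1: getting exactly the rhombus identity from the given triangle and hexagon constraints near the boundary, where not every vertex lies on a hexagon. I would first verify it on the interior using a hexagon together with its six surrounding triangles. For boundary rhombi I would instead use the decomposition of a unit hexagon into two alternating triples, which the hexagon condition forces to have sums $T_n$ each once one triple is known to be a triangle-type triple.
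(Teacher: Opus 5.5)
You have a genuine gap in Step 3. Steps 1--2 are essentially sound. A side-$2$ rhombus is a unit hexagon $h_1\cdots h_6$ together with the unit triangles on the opposite edges $h_2h_3$ and $h_5h_6$; these two triangles share no vertex, contrary to your description. Adding the two triangle equations and subtracting the hexagon equation gives $t+t'=h_1+h_4$. Every lattice parallelogram in the region is tiled by such rhombi lying inside it, so telescoping proves the third bullet with no boundary issue. This is a valid, more local alternative to the paper's corner-cutting argument.

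The first problem is your ``equivalent'' form $\varphi(x)=\alpha(x)+c_\epsilon$ with $\alpha$ linear, which is false. Index the three families of lattice lines by $u,v,w$. All four corners of a lattice parallelogram are intersections of the same two families, so Step 2 only gives additive separability on each vertex class: $\varphi=g(u)+h(v)$ on the class of $u$--$v$ intersections, with \emph{arbitrary} $g,h$. Indeed, every assignment of the form $g(u)+h(v)$, $-g(u)+k(w)+c$, $-h(v)-k(w)+c'$ on the three classes satisfies all face equations.

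Your model is even self-contradictory. The translation invariance you want would force $\alpha$ to vanish on all lattice translations, making $\varphi$ constant on each class, which is impossible for a bijection once $n\ge 2$.

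More fundamentally, parallelogram identities never mix classes, while a lattice triangle has exactly one corner in each class. So no combination of them can make the change at one corner ``compensated by the other two,'' and the same objection defeats your scaling step. The cross-class coupling has to come from the unit triangle and hexagon equations. The paper gets it by inclusion--exclusion inside the subtriangle: corner sum $=$ (all vertices) $-$ (hexagons) $+$ (inverted unit triangles) $=\bigl(\binom{M+1}{2}-2\binom{M}{2}+\binom{M-1}{2}\bigr)T_n=T_n$.

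The hexagon step is also wrong. Alternate vertices of a lattice hexagon are never corners of a lattice triangle, since no two of them lie on a common lattice line. Extending alternate sides produces a triangle whose corners are three \emph{new} points, and each cut-off corner triangle contains two \emph{adjacent} hexagon vertices.

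Alternate triples genuinely do not have constant sum. For $n=2$, the labeling $a_{00}=2$, $a_{01}=6$, $a_{11}=7$, $a_{02}=3$, $a_{22}=1$, $a_{03}=4$, $a_{13}=8$, $a_{23}=5$, $a_{33}=9$ is magic. It stays magic after swapping $a_{01}$ and $a_{11}$ (the corner-pair swap noted in the paper), yet that swap changes the two alternate-triple sums from $15,15$ to $16,14$.

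The correct argument is the paper's. With $a,b,c$ the corners of the enclosing upright triangle, the hexagon sum is $3T_n-(a+b+c)=2T_n$, which requires the triangle lemma for triangles of arbitrary odd size. Your ``main obstacle'' plan for boundary rhombi rests on the same false alternate-triple claim, though it is unnecessary anyway.
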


\begin{lemma}[Triangle-sum invariance]\label{lem:triangle-sum-invariance}
Let $T$ be an $N \times N$ triangular configuration, and let $T'$ be any $M \times M$ subtriangle contained in $T$, either upright or inverted. Then the sum of the values at the three vertices of $T'$ is equal to the sum of the values at the three vertices of $T$.
\end{lemma}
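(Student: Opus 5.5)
Set up coordinates so the lattice triangles can be handled by induction on side length. Place the vertices of the order-$n$ configuration in a triangular coordinate system: points $(i,j)$ with $i,j\ge 0$, $i+j\le 2n-1$, where hexagon centers (the removed points, those with $i$ and $j$ both odd, in a suitable normalization) are excluded. A lattice triangle of side length $M$ (necessarily odd, by the Remark) is then given by a corner and $M$: either an upright triangle with corners $(i,j),(i+M,j),(i,j+M)$, or an inverted one. For $M=1$ these are exactly the triangular faces, whose sums all equal $T_n$ by definition. The plan is to show that every lattice triangle of side $M$ has vertex-sum $T_n$, by induction on $M$ in steps of $2$. Applying this to $T$ and to $T'$ then gives the lemma.

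The inductive step uses the hexagon constraint $H_n=2T_n$, which is the only extra input available. Take a lattice triangle $\Delta$ of side $M+2$. I would split $\Delta$ so that its three corners, together with three nearby auxiliary points, can be grouped into lattice triangles of side $M$ plus a unit hexagon or unit triangles.

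The case I expect to work is the following, shown first for $M+2=3$ with an upright $\Delta$. The three corners $A,B,C$ of the side-$3$ triangle lie in three corner unit triangles. The middle of $\Delta$ contains a unit hexagon whose six vertices alternate between belonging to those three corner unit triangles and to the central inverted triangle. Adding the three corner unit-triangle sums gives $3T_n$. This equals $A+B+C$ plus the six hexagon vertices, so $A+B+C=3T_n-H_n=T_n$.

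For general odd side, I would use the same additive decomposition, built from the propagation principle in Proposition (Local determinacy and propagation), part (iii). Adding a strip of unit triangles and hexagons along one side shifts a vertex of the triangle by two lattice steps without changing the sum. Concretely, I would show that a parallelogram-type relation holds: for four points forming a unit rhombus of side $2$, the opposite-vertex sums agree. This follows because two adjacent unit triangles plus the shared hexagon give $2T_n$ on each side of the hexagon. Moving one corner of a side-$M$ triangle by $2$ along an edge direction then uses one such relation, and it grows the triangle to side $M+2$ while keeping the vertex-sum fixed.

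Inverted triangles are handled the same way. Either start from inverted unit faces (sum $T_n$), or note that an inverted triangle of side $M$ together with the three corner moves reduces to an upright one.

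The main obstacle is the bookkeeping of boundary cases. Each local relation uses a hexagon or unit triangle that must actually exist inside the region. For triangles touching the outer boundary, I would choose the direction of the corner move so that the auxiliary faces lie inside $\Delta$ itself. This is always possible because $\Delta$ has odd side length and contains the required hexagons.
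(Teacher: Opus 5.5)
Your base case is correct. For a side-$3$ upright triangle, the three corner unit triangles cover the three corners and the six vertices of the central hexagon exactly once, so the corner sum is $3T_n-H_n=T_n$. (That region contains no inverted face, so the remark about a central inverted triangle is unnecessary.)

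\textbf{The gap is the inductive step, which fails as described.} First, a lattice triangle of side $M$ and one of side $M+2$ share at most one corner, so moving a single corner cannot turn one into the other. You must move both endpoints of a side. With $e_1=(1,0)$, $e_2=(0,1)$ and $P$ having both coordinates even, you need for example
\[
v(P+(M+2)e_1)+v(P+(M+2)e_2)=v(P+Me_1)+v(P+Me_2).
\]
Second, and more seriously, no rhombus relation, and no combination of them, can give this identity:
\begin{itemize}
\item A rhombus of side $2$ has all four vertices in one class of $(i \bmod 2,\ j \bmod 2)$. So every rhombus relation, and every linear combination of them, splits class by class.
\item The three corners of an odd-side triangle lie in three different classes, and the displayed identity involves two of them.
\item If the identity followed from rhombus relations, so would its part in the class of $P+Me_1$, namely $v(P+(M+2)e_1)=v(P+Me_1)$. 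That would force two distinct labels to coincide.
\end{itemize}
So the unit-triangle constraints must enter the step itself, and your proposal does not say how.

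Your ``strip'' idea is the right repair, once it pushes out a whole side and uses inverted faces too. The faces of the side-$(M+2)$ triangle not in the side-$M$ one form a strip along the pushed side. It contains $\tfrac{M+3}{2}$ upright unit triangles, $\tfrac{M+1}{2}$ hexagons and $\tfrac{M-1}{2}$ inverted unit triangles. Take (upright sums) $-$ (hexagon sums) $+$ (inverted sums) over this strip:
\begin{itemize}
\item As a linear form, it is exactly the left side minus the right side of the displayed identity.
\item Its value is $\bigl(\tfrac{M+3}{2}-(M+1)+\tfrac{M-1}{2}\bigr)T_n=0$.
\item All these faces lie inside the larger triangle, so your boundary concern disappears.
\end{itemize}
Telescoping these strips gives exactly the paper's proof. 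Over an order-$m$ subfigure, sum all upright unit triangles, subtract all hexagons and add all inverted unit triangles. Each corner is left with coefficient $1$ and every other vertex with coefficient $0$, and the value is $\bigl(\binom{m+1}{2}-2\binom{m}{2}+\binom{m-1}{2}\bigr)T_n=T_n$. The inverted case then follows from the half-turn of the tiling about a hexagon centre, which swaps upright and inverted faces; both kinds have sum $T_n$.
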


\begin{proof}
Let $\mu$ be the average of all values on the vertices of $T$.
By definition, the sum of the values on any triangle is
\[
T_n = 3\mu,
\]
and the sum of the values on any hexagon is
\[
H_n = 6\mu.
\]

We first show that the sum of the values at the three vertices of the whole $N\times N$ triangle is also $3\mu$.

The total number of vertices in $T$ is
\[
3\binom{N+1}{2}.
\]
Hence the total sum of all vertex values is
\[
3\binom{N+1}{2}\mu.
\]

Now classify the vertices of $T$ as follows:
\begin{itemize}
    \item each vertex on an edge but not a corner belongs to exactly one hexagon;
    \item each interior vertex belongs to exactly two hexagons;
    \item each interior vertex belongs to exactly one inverted triangle.
\end{itemize}

The number of hexagons is
\[
\binom{N}{2},
\]
and the number of inverted triangles is
\[
\binom{N-1}{2}.
\]

Therefore, if we subtract the sums over all hexagons and add the sums over all inverted triangles, then every non-corner vertex is cancelled exactly once, while each corner vertex remains once. It follows that the sum of the three corner values of $T$ is
\[
3\binom{N+1}{2}\mu
-6\binom{N}{2}\mu
+3\binom{N-1}{2}\mu.
\]
A direct simplification gives
\[
3\binom{N+1}{2}\mu
-6\binom{N}{2}\mu
+3\binom{N-1}{2}\mu
=3\mu.
\]
Thus the sum of the three vertices of the whole triangle is $3\mu$, namely $T_n$.

The same argument applies to any upright or inverted $M\times M$ subtriangle $T'$, since it is itself a triangular configuration of order $M$. Hence the sum of the values at the three vertices of $T'$ is also $3\mu$.

Therefore, every subtriangle has the same vertex-sum as the whole triangle.
\end{proof}

\begin{figure}[ht]
    \centering
    \includegraphics[width=0.6\textwidth]{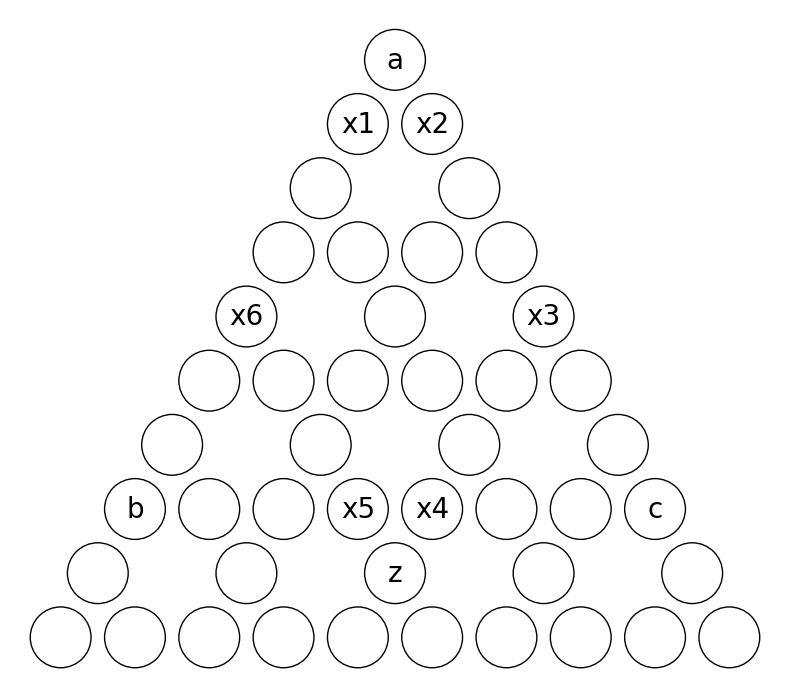}
    \caption{Illustration of the construction order}
    \label{fig:examplepolygon}
\end{figure}

\begin{proposition}[Hexagon-sum invariance]\label{prop:triangle-sum-invariance}
Let $H$ be any hexagon formed by six vertices in the triangular configuration,
whose edges lie along straight lines connecting adjacent nodes of the lattice.
Then the sum of the values at its vertices is constant,
independent of the choice of the hexagon.
\end{proposition}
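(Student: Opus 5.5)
A lattice hexagon is a big lattice triangle with its three corners cut off. I would use exactly that decomposition, together with Lemma~\ref{lem:triangle-sum-invariance}, to show that the six-vertex sum of any lattice hexagon $H$ equals $6\mu = 2T_n$.

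\textbf{Step 1 (extension to a triangle).} By part (3) of the remark on lattice polygons, the side lengths of $H$ are odd. Write them cyclically as $a_1,b_1,a_2,b_2,a_3,b_3$, so that $H$ has three alternating sides $a_i$ and $b_i$ along the three lattice directions. Extend the sides $b_1,b_2,b_3$ until they meet. They bound an upright (or inverted) lattice triangle $\Delta$ of side length $L=b_1+a_2+b_3$, and the same value is obtained from the other two sides by the usual hexagon identity $a_1+b_1=a_2+b_2=\dots$. The extension also produces three corner triangles $\Delta_1,\Delta_2,\Delta_3$ of side lengths $a_1,a_2,a_3$. Each $\Delta_i$ has one corner at a corner $C_i$ of $\Delta$ and its other two corners at a pair of adjacent vertices $P_i,Q_i$ of $H$. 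The six vertices of $H$ are exactly $P_1,Q_1,P_2,Q_2,P_3,Q_3$.

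\textbf{Step 2 (triangle sums).} I must check that $\Delta$ and each $\Delta_i$ are lattice triangles whose corners are actual vertices of the configuration. This uses the odd-side-length statements of the remark: the $a_i$ are odd, and $L$ is a sum of three odd numbers, hence odd. It also uses the observation that extending a lattice line stays on the trihexagonal lattice points, possibly outside the region. Granting this, Lemma~\ref{lem:triangle-sum-invariance} gives $\varphi(C_1)+\varphi(C_2)+\varphi(C_3)=3\mu$ and $\varphi(C_i)+\varphi(P_i)+\varphi(Q_i)=3\mu$ for each $i$. Summing the three corner-triangle identities and subtracting the one for $\Delta$ gives
\[
\sum_{v\in H}\varphi(v)=3\cdot 3\mu-3\mu=6\mu=2T_n ,
\]
which is independent of $H$.

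\textbf{Main obstacle.} The corners $C_i$ of $\Delta$ may fall outside the order-$n$ region, where there are no labels. This is the step I expect to be hardest. My first remedy is to choose among the two possible extensions: extend either the sides $b_i$ or the sides $a_i$, giving an upright versus an inverted enclosing triangle. I would argue that one of the two lies inside the outer triangle whenever $H$ does, since $H$ sits inside the big upright triangle, and extending toward the upright orientation stays inside its convex hull.

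If that fails, the fallback is to avoid external points altogether. I would tile $H$ by unit hexagons and small triangles and telescope: when the hexagon is enlarged by shifting one side outward by two lattice steps, the change in the vertex sum is a difference of two triangle sums. Each of these equals $3\mu$ by Lemma~\ref{lem:triangle-sum-invariance}, so the change is $0$. Starting from a unit hexagon, whose sum is $6\mu$ by definition, induction on the side lengths then yields the claim.
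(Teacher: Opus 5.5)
Your main argument is the paper's own proof: extend three alternate sides of $H$ to an upright enclosing triangle, apply Lemma~\ref{lem:triangle-sum-invariance} to it and to the three corner triangles cut off by the remaining sides, and subtract to get $9\mu-3\mu=6\mu=2T_n$. Your first remedy (the smallest upright triangle containing $H$ lies inside the outer upright triangle) correctly justifies a containment point the paper leaves implicit, so the fallback is unnecessary; that is just as well, because shifting a side of an equiangular hexagon outward shortens that side, so the induction cannot be started from a unit hexagon. There is one harmless slip: the enclosing side length is $a_1+b_1+a_2$ (equal to $a_2+b_2+a_3$ and $a_3+b_3+a_1$, since $a_1+b_1=b_2+a_3$ and cyclically), not $b_1+a_2+b_3$, but it is still a sum of three odd numbers, which is all you use.
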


\begin{proof}
Let the six vertices of $H$ be denoted, in cyclic order, by
\[
x_1,x_2,x_3,x_4,x_5,x_6.
\]
(See Figure~\ref{fig:examplepolygon} for a representative configuration of this construction.)

Extend the sides
\[
x_2x_3,\qquad x_4x_5,\qquad x_6x_1
\]
until the three resulting lines meet pairwise.
Denote these three intersection points by $a,b,c$.
Then $a,b,c$ form an upright triangle containing $H$.

Moreover, the nine vertices
\[
a,b,c,x_1,x_2,x_3,x_4,x_5,x_6
\]
can be partitioned into three triangles,
for example
\[
(a,x_1,x_2),\qquad (b,x_5,x_6),\qquad (c,x_3,x_4),
\]
each of whose sides lies along lattice lines.

By Lemma~\ref{lem:triangle-sum-invariance}, each of these three triangles has vertex-sum $T_n$.
Hence
\[
(a+x_1+x_2)+(b+x_5+x_6)+(c+x_3+x_4)=3T_n.
\]
Since $a,b,c$ themselves form a triangle, we also have
\[
a+b+c=T_n.
\]
Subtracting gives
\[
x_1+x_2+x_3+x_4+x_5+x_6 = 2T_n.
\]
Therefore the hexagon-sum is invariant.

\end{proof}

\begin{theorem}[Rhombus or Parallelogram invariance]\label{thm:parallelogram-invariance}
Consider an $M \times N$ rhombus or parallelogram-shaped region in the triangular configuration, regardless of its orientation. 
This region consists of $MN$ hexagons and $2MN$ triangles, andside of the rhombus contains $2M+1$ or $2N+1$ vertices, including the endpoints.

Fix such a rhombus. Then the sums of the values at opposite vertices of the rhombus are equal.
\end{theorem}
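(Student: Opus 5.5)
The plan is to express the four corners of the parallelogram through one lattice hexagon and two lattice triangles. Then Lemma~\ref{lem:triangle-sum-invariance} and Proposition~\ref{prop:triangle-sum-invariance} force the required identity. Let the parallelogram have corners $A,B,C,D$ in cyclic order. By Remark (4), its interior angles are $60^\circ$ at $A$ and $C$ and $120^\circ$ at $B$ and $D$. Its side lengths are $|AB|=|CD|=2M$ and $|AD|=|BC|=2N$, both even. The claim to prove is $A+C=B+D$; here and below a letter also denotes the label at that vertex.

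\emph{Step 1 (truncating the acute corners).} On the sides through $A$, take the vertices $P\in AB$ and $Q\in AD$ at lattice distance $1$ from $A$. Similarly, take $R\in CB$ and $S\in CD$ at distance $1$ from $C$. Then $A,P,Q$ span a lattice triangle of side $1$, and so do $C,R,S$. By Lemma~\ref{lem:triangle-sum-invariance},
\[
A+P+Q=T_n,\qquad C+R+S=T_n .
\]

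\emph{Step 2 (the remaining hexagon).} Cutting these two corner triangles off the parallelogram leaves the lattice hexagon with vertices $P,B,R,S,D,Q$ in cyclic order. Its side lengths are
\[
|PB|=2M-1,\quad |BR|=2N-1,\quad |RS|=1,\quad |SD|=2M-1,\quad |DQ|=2N-1,\quad |QP|=1 .
\]
All of these are odd, consistent with Remark (3). Its sides lie on lattice lines, so Proposition~\ref{prop:triangle-sum-invariance} gives
\[
P+B+R+S+D+Q=2T_n .
\]

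\emph{Step 3 (combining).} Adding the two triangle identities and subtracting the hexagon identity cancels $P,Q,R,S$. What remains is $A+C=B+D$, that is, equality of the opposite-vertex sums.

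The main obstacle is Step 1. I must check that the points at distance $1$ from a $60^\circ$ corner really are vertices of the configuration. I must also check that the small triangle they span, and the resulting hexagon, are legitimate lattice polygons to which the earlier results apply. In particular, the proof of Lemma~\ref{lem:triangle-sum-invariance} treats a subtriangle as a trihexagonal subfigure in its own right.

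The parity facts in Remark (2)--(4) are what make this work. A parallelogram corner is a vertex where two lattice lines meet. Stepping one unit along each side stays on the lattice and closes up the unit triangle face containing that corner, which has odd side $1$. The even side lengths $2M,2N$ then become $2M-1$ and $2N-1$, so the hexagon has all sides odd, exactly the shape Proposition~\ref{prop:triangle-sum-invariance} handles.

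If the corner triangle turned out to be inverted rather than upright, I would invoke the inverted case of Lemma~\ref{lem:triangle-sum-invariance}. The statement is symmetric in orientation, so the argument does not depend on which of the three directions the parallelogram uses.
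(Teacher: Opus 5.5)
Your proof is correct and is essentially the paper's argument. The paper also cuts a lattice triangle off each acute corner. The four new vertices together with the obtuse corners form a lattice hexagon with sum $H_n$, while together with the acute corners they form two triangles with total $2T_n$, and $H_n=2T_n$ gives the result. The only difference is that the paper allows corner triangles of any side shorter than the shorter side of the parallelogram, whereas you use unit triangles; these are faces, so $T_n$ holds for them by definition rather than via Lemma~\ref{lem:triangle-sum-invariance}.
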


\begin{proof}
Let $P$ be the given parallelogram in the triangular lattice,
whose edges lie along lattice directions.

In Figure~\ref{fig:examplepolygon}, the vertices $a$ and $z$ are the two opposite vertices with acute angles, 
while $x_3$ and $x_6$ are the two opposite vertices with obtuse angles.

Choose an acute vertex of $P$, and consider a smaller triangle based at this vertex 
whose side length is strictly smaller than that of the shorter side of $P$. 
Using this triangle, we construct a hexagon $H$ inscribed in $P$ whose edges follow lattice lines. 
In Figure~\ref{fig:examplepolygon}, this corresponds to selecting the pairs $(x_1,x_2)$ at $a$ and $(x_4,x_5)$ at $z$. 
The same construction is performed at both acute vertices of $P$.

This construction introduces four additional vertices.
Together with the two vertices of one diagonal of $P$ (the shorter diagonal),
these six vertices form a hexagon.
Hence their sum is $H_n$.

On the other hand, the same four vertices together with the two vertices of the other diagonal
(the longer diagonal) decompose into two triangles.
Hence their total contribution is $2T_n$.

Since $H_n = 2T_n$, the sums of the values at the two pairs of opposite vertices coincide.
Therefore, the sums along both diagonals of $P$ are equal.
\end{proof}

\section{Enumeration and existence for small orders}

For the purpose of enumeration, we follow the standard convention used for magic squares and count trihexagonal magic figures up to the dihedral symmetries of the outer triangle, namely rotations and reflections.

For $n=1$, the configuration is trivial, and there is a unique solution up to symmetry. Hence the number of solutions is $1$.

For $n=2$, the configuration consists of placing the numbers $1$ through $9$ on a triangular arrangement of side length $4$, with the central position removed. The three vertices of the outer triangle determine the structure, and fixing their relative order allows one to enumerate all solutions directly.

A straightforward enumeration shows that there are $80$ solutions up to symmetry.

\paragraph{Divide-and-conquer strategy for n=5}

We describe a divide-and-conquer procedure for determining the central part of the configuration.

We begin by assigning values to the six vertices adjacent to the central hexagon in the $45$-node triangular configuration. 
These vertices form a star-shaped pattern consisting of one upright triangle and one inverted triangle. 
The sum of the values on each of these triangles is equal to the magic sum $T_n$ of a unit triangle (see Lemma~\ref{lem:triangle-sum-invariance}).

\begin{figure}[ht]
    \centering
    \includegraphics[width=0.6\textwidth]{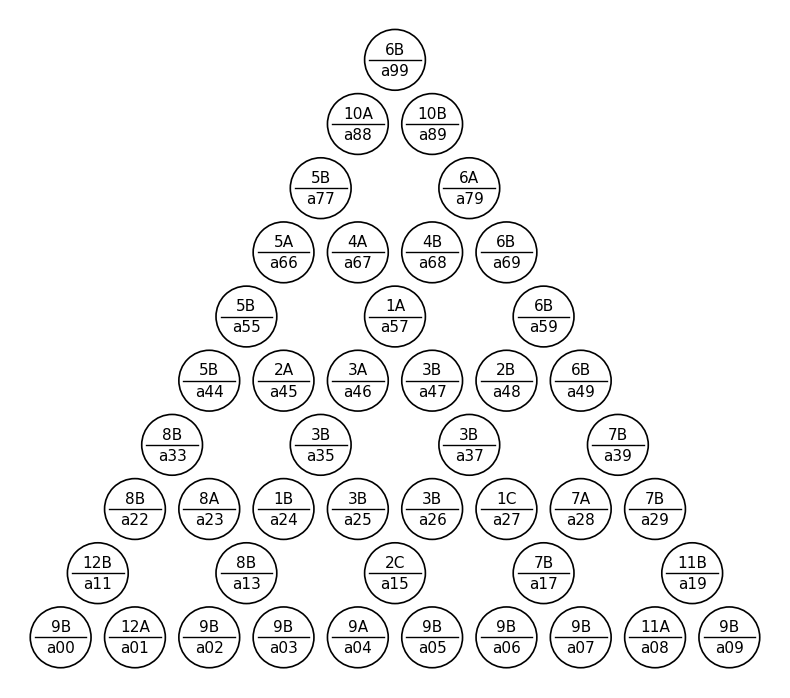}
    \caption{Illustration of the construction order}
    \label{fig:sunseo}
\end{figure}

To avoid counting configurations equivalent under rotations and reflections, we impose an ordering condition and first assign values to the upright triangle so that its sum is $69$. This fixes a canonical representative. We then determine the values on the inverted triangle accordingly.

Once the value at one of the six vertices adjacent to the central hexagon is fixed, all remaining vertices of the central hexagon are uniquely determined by the local propagation rules.

After fixing the central configuration, the remaining vertices are determined sequentially by repeated application of the local propagation rules.

At each step, once a vertex is assigned, the values of adjacent vertices are uniquely determined. This dependency is illustrated in Figure~\ref{fig:sunseo}, where arrows indicate the direction of determination.

Thus, the entire configuration is uniquely determined from the choices at each step.

We now explain the labeling used in Figure~\ref{fig:sunseo}.

In the first two steps, labeled $1$ and $2$, the symbols $A$, $B$, and $C$ indicate the order in which the values of the corresponding triangles are assigned. 
These steps each have two degrees of freedom.

From the subsequent steps onward, the labeling reflects the propagation structure. 
If two vertices share the same index $n$, then the values at all vertices labeled $nB$ are uniquely determined once the value at the vertex labeled $nA$ is fixed. 
In other words, each step $n \ge 3$ introduces a single degree of freedom, and all remaining vertices with the same index are determined simultaneously by propagation.

We next describe the behavior of the boundary vertices.

The three corner vertices of the large triangle are determined before the values at the vertices adjacent to them are fixed. 
Indeed, they are determined via the propagation rule that if four consecutive vertices of a hexagon are known, then the vertex of the opposite triangle is uniquely determined.

In contrast, the six vertices adjacent to the three corners are determined at the final stage. 
These six vertices form three disjoint adjacent pairs. Within each pair, the two values can be interchanged without affecting the validity of the configuration.

This symmetry holds for all $n \ge 2$. Consequently, the number of distinct configurations (up to the imposed ordering) is always divisible by $2^3 = 8$ for $n \ge 2$.

\begin{figure}[ht]
    \centering
    \includegraphics[width=0.6\textwidth]{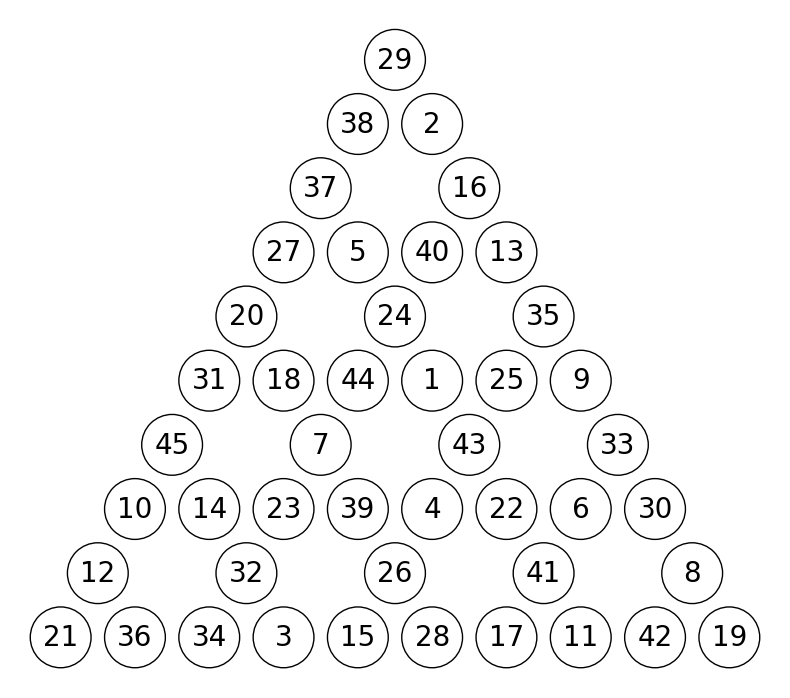}
    \caption{Illustration of the example solution at n=5}
    \label{fig:solution5}
\end{figure}

For \(n=5\), our enumeration yields \(6976\) base solutions, inequivalent up to the swaps of the three certain adjacent vertex pairs described above. As each pair may be swapped independently, every base solution gives rise to \(2^3=8\) distinct magic labelings. Hence the total number of magic labelings is
\[
6976 \cdot 8 = 55808.
\]

The complete enumeration source code used for the cases \(n=2\) and
\(n=5\) is included in the Appendix for reproducibility, and the original
programs were previously published in the author’s public GitHub
repository.

One example of an order-\(5\) trihexagonal magic labeling is displayed in Figure~\ref{fig:solution5}.

\paragraph{Heuristic asymptotic estimate}

We conclude with a rough heuristic estimate for the growth of the number of solutions as \(n\) increases.

Let \(N = \frac{3n(n+1)}{2}\) be the number of vertices. The labeling is a permutation of \(\{1,2,\dots,N\}\), so the total number of assignments is \(N!\).

On the other hand, the magic conditions impose a system of linear constraints. 
Let \(C_n\) denote the number of (naive) constraints, that is, the number of triangular and hexagonal face conditions. From Proposition~\ref{prop:constraints}, we have
\[
C_n = \binom{n+1}{2} + \binom{n}{2} + \binom{n-1}{2}
= \frac{3n^2 - 3n + 2}{2}.
\]

Heuristically, if each constraint reduces the number of degrees of freedom by one and behaves independently, one may expect the number of solutions to be on the order of
\[
\frac{N!}{N^{C_n}}.
\]

Since
\[
N = \frac{3n(n+1)}{2}, \quad
C_n \sim \frac{3n^2}{2},
\]
the ratio between the number of constraints and the number of variables satisfies
\[
\frac{C_n}{N} \to 1 \quad \text{as } n \to \infty.
\]

This suggests that the system is asymptotically tight, and that the number of solutions grows much more slowly than \(N!\), potentially exhibiting a transition to non-existence beyond a certain size. We leave a rigorous analysis of this phenomenon as an open problem.

Phase transition phenomena are well known in combinatorics:
as a density or probability parameter crosses a critical range, a
discrete structure may abruptly change its typical properties.
\cite{JansonLuczakRucinski2000,Friedgut1999,MolloyReed1995}

We suggest that an analogous phenomenon may occur even when the
constraint density is not random but increases according to a deterministic
rule with the order. In such systems, both the existence of solutions and
the difficulty of construction may change sharply beyond a certain
threshold.

\begin{table}[ht]
\centering
\caption{Heuristic estimates and known data for small orders}
\label{tab:heuristic}
\begin{tabular}{cccccc}
\toprule
\(n\) & \(N\) & \(C_n\) 
& \(\displaystyle \frac{N!}{N^{C_n}}\) 
& \(\displaystyle \frac{N!}{N^{C_n-1}}\) 
& solutions\footnotemark \\
\midrule
1  & 3   & 1   & \(2\)                     & \(6\)                     & \(1\) \\
2  & 9   & 4   & \(55.31\)                & \(497.78\)               & \(80\) \\
3  & 18  & 10  & \(1.793\times 10^3\)     & \(3.228\times 10^4\)     & \(\varnothing\) \\
4  & 30  & 19  & \(2.282\times 10^4\)     & \(6.847\times 10^5\)     & \(\varnothing\) \\
5  & 45  & 31  & \(6.733\times 10^4\)     & \(3.030\times 10^6\)     & \(55808\) \\
6  & 63  & 46  & \(3.370\times 10^4\)     & \(2.123\times 10^6\)     & --- \\
7  & 84  & 64  & \(2.325\times 10^3\)     & \(1.953\times 10^5\)     & \(\varnothing\) \\
8  & 108 & 85  & \(1.910\times 10^1\)     & \(2.063\times 10^3\)     & \(\varnothing\) \\
9  & 135 & 109 & \(1.673\times 10^{-2}\)  & \(2.258\)                & --- \\
10 & 165 & 136 & \(1.434\times 10^{-6}\)  & \(2.366\times 10^{-4}\)  & --- \\
\bottomrule
\end{tabular}
\end{table}

\footnotetext{Solutions are counted up to the dihedral symmetries of the outer triangle. 
If rotations and reflections are distinguished, the total number of labelings is six times larger.}

\paragraph{Comparison with classical magic figures}

It is natural to compare the present construction with previously studied magic figures arising from Euclidean tilings. In particular, configurations based on tilings in which three polygons meet at each vertex have asymptotic constraint density \(1/2\).

Representative examples include Choi Seok-jeong's \emph{Jisugwimundo}, which is naturally associated with a hexagonal lattice,\cite{ChoeChoiMoon2003} and Yang Hui's \emph{yeonhwando},\cite{Park2016} which may be interpreted in relation to the truncated square tiling(4.8.8). Both illustrate that classical magic figures already arise from Euclidean tiling structures, although the tilings and local constraint patterns differ substantially from the trihexagonal case considered here.

In contrast, the trihexagonal tiling considered in this paper has asymptotic constraint density \(1\), which is the maximal value among admissible uniform tilings compatible with the present type of labeling. This substantial increase in constraint density leads to a much more rigid system, as reflected in the rapid decrease of the heuristic estimates and the apparent disappearance of solutions for certain values of \(n\).

We also remark that a closely related idea appears implicitly in constructions based on the square tiling. Although there is no universally established terminology, one may consider labelings of the vertices in which all square faces have the same vertex-sum, i.e., a face-magic labeling on the square grid. The enumeration of such square-grid configurations is recorded in OEIS A355256 \cite{OEISA355256}.

From another perspective, the configurations counted by OEIS A355256 may be regarded as square analogues of the \emph{Jisugwimundo}, since both can be viewed as face-magic labelings of regular tilings.

While this viewpoint has not been widely studied as an independent subject, it is naturally realized in classical objects such as most-perfect magic squares \cite{OllerenshawBree1998}, which satisfy strong local sum constraints on subsquares.

Square-based configurations admit a much more flexible global structure. 
Therefore, the fact that a naive estimate obtained from the number of local
constraints tends to zero does not imply the actual disappearance of solutions.

A simple counting heuristic further illustrates this discrepancy. For instance,
one obtains an estimate of the form
\[
\frac{(n^2)!}{(n^2)^{(n-1)^2}},
\]
which already falls below 1 for moderate values (e.g., around $n=8$ or $9$).
Taken at face value, this would suggest that solutions should not exist.

However, this heuristic ignores substantial structural degeneracies in the
square-based setting. In particular, once a solution is constructed, additional
solutions can be generated by permuting rows within the same parity class:
odd-indexed rows may be permuted among themselves, and even-indexed rows
likewise. These symmetries alone already produce a large family of distinct
solutions, demonstrating that the naive estimate significantly undercounts the
true number.

In Appendix, we present explicit constructions illustrating the existence of
such solutions.

Beyond the existence of solutions for OEIS~A355256, an even stronger
example is provided by most-perfect magic squares. These are known to
exist for every doubly-even order \(n\) (that is, whenever \(4\mid n\))
\cite{OllerenshawBree1998}.

This shows that the fact that a heuristic enumeration estimate based on
constraint density tends to zero under a random-array assumption does
not necessarily imply a phase transition in the actual solution space.
Even under substantially stronger local and global constraints,
solutions may still persist for infinitely many orders when sufficient
algebraic structure is present.

The same local-sum principle of OEIS~A355256 can also be extended from square arrays to
triangular ones.

As a further example, consider the configuration counted by OEIS~A375416,\cite{OEISA375416}.
namely order-$n$ magic triangles with $n^2$ unit triangles(nodes) in which all $2\times2$ subtriangles have a
common sum. This again constitutes a region-based constraint system with a
high degree of symmetry, and the number of constraints grows quadratically
with $n$, leading to an asymptotic constraint density approaching 1.

Such examples suggest a broader question: given a family of combinatorial
configurations arising from a regular tiling (e.g., triangular, square, or
mixed tilings), and imposing uniform sum conditions on local regions, does
the system admit solutions for arbitrarily large $n$?

The evidence presented here indicates that the answer depends sensitively on
the interaction structure of the constraints. Even when the number of
constraints is comparable to the number of variables, and the underlying
geometry is highly symmetric, the resulting systems may either admit
infinitely many solutions or become infeasible beyond a certain threshold.

From the perspective of combinatorial design, this highlights the importance
of understanding not only the density of constraints but also their geometric
and algebraic compatibility. In particular, the transition between persistent
existence and eventual nonexistence appears to be governed by subtle structural
properties of the underlying tiling.
\subparagraph{Attribution and originality.}

The square-based family A355256 is not a newly introduced object: its
underlying local-sum structure already appears implicitly in classical
square-based magic figures in the Western literature, and it was later
formulated explicitly in Dairyong Jeong’s \textit{Susja Nori}\cite{JeongSite,JeongQuadrangle,JeongOdd,JeongEven}, now no
longer publicly maintained.

By contrast, the systems A375416 and A391236 are original constructions
introduced by the author as independent combinatorial models for studying
phase transitions under asymptotically balanced constraints. Their
enumeration sequences were previously recorded in OEIS, while the present
work provides the mathematical formulation and interpretation of these
families in the context of combinatorial design.

\subparagraph{Constraint density and heuristic phase transition.}
We first observe that, under the naive random-labeling heuristic with 
independently satisfied constraints, the asymptotic constraint density 
determines two qualitatively different regimes.

When the asymptotic constraint density approaches $1$, the expected number 
of admissible labelings tends to zero, suggesting the disappearance of 
solutions for sufficiently large orders.

In contrast, when the asymptotic constraint density remains strictly below 
$1$, the same heuristic predicts that admissible labelings do not vanish 
asymptotically, since a positive proportion of effective degrees of freedom 
remains.

This distinction is formalized in the following two propositions.

\begin{proposition}[Naive Overconstraint Collapse]
Let $k \ge 1$ and $\alpha > 0$ be fixed constants. 
Suppose the node count $N(n)$ and the naive constraint count $C(n)$ satisfy
\[
N(n) = \alpha n^k + O(n^{k-1}), 
\qquad
C(n) = \alpha n^k + O(n^{k-1}).
\]
Then
\[
\frac{N(n)!}{N(n)^{\,C(n)}} \longrightarrow 0
\qquad \text{as } n \to \infty.
\]
\end{proposition}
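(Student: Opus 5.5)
The plan is to work with logarithms and use Stirling's bound to compare $\log N(n)!$ with $C(n)\log N(n)$. Write $N=N(n)$, $C=C(n)$ and $D=D(n)=N-C$. By hypothesis the leading terms $\alpha n^k$ cancel, so $D=O(n^{k-1})$. Since $\alpha>0$, we also have $N\sim\alpha n^k\to\infty$, and hence $n=O(N^{1/k})$. Together these give
\[
|D| = O\bigl(N^{(k-1)/k}\bigr).
\]
This is the only place where the two asymptotic hypotheses are used. It makes precise the claim that the ``excess'' $N-C$ is of strictly lower order than $N$.

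Next I would bound the logarithm of the quantity in question. Stirling's formula, or the elementary bound $N!\le e\,N^{N+1/2}e^{-N}$, gives $\log N!\le N\log N-N+\tfrac12\log N+1$. Therefore
\[
\log\frac{N!}{N^{C}} = \log N! - C\log N \le D\log N - N + \tfrac12\log N + 1 .
\]
Now I would use the estimate on $D$. The terms $D\log N$ and $\tfrac12\log N$ are $O\bigl(N^{1-1/k}\log N\bigr)$, which is $o(N)$ for every fixed $k\ge1$. When $k=1$ this is just $O(\log N)$. If $D$ happens to be negative, the term $D\log N$ only helps, so it suffices to bound $|D|\log N$. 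Hence the right-hand side equals $-N\,(1+o(1))$, which tends to $-\infty$. Exponentiating gives $N!/N^{C}\to0$, and in fact the decay is like $e^{-(1+o(1))N}$.

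I do not expect a real obstacle. The one step that needs care is checking that $(N-C)\log N=o(N)$. This requires converting the $O(n^{k-1})$ error into a power of $N$ strictly below $1$, which uses $\alpha>0$ and the fixed $k$. The logarithmic factor from $C\log N$ is then absorbed because $N^{-1/k}\log N\to0$.
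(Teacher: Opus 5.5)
Your proposal is correct and follows essentially the same route as the paper: take logarithms, use Stirling to write $\log(N!/N^{C}) = (N-C)\log N - N + O(\log N)$, and note that $(N-C)\log N = O(n^{k-1}\log n) = o(N)$ because $N = \Theta(n^k)$. The only differences are cosmetic. You use a one-sided explicit Stirling upper bound, which suffices here, and you express the error in terms of $N^{(k-1)/k}$ rather than keeping it in terms of $n$.
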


\begin{proof}
By Stirling's formula,
\[
\log(N!) = N \log N - N + O(\log N).
\]
Hence
\[
\log\!\left(\frac{N!}{N^{C}}\right)
=
\log(N!) - C \log N
=
(N-C)\log N - N + O(\log N).
\]

Since $N(n) - C(n) = O(n^{k-1})$ while 
$N(n) = \Theta(n^k)$, we have
\[
(N-C)\log N = O(n^{k-1}\log n),
\]
whereas
\[
N = \Theta(n^k).
\]
Therefore,
\[
(N-C)\log N - N + O(\log N)
=
-\Theta(n^k) + O(n^{k-1}\log n)
\longrightarrow -\infty.
\]

Exponentiating both sides yields
\[
\frac{N(n)!}{N(n)^{\,C(n)}} \longrightarrow 0.
\]
\end{proof}

\begin{proposition}[Naive Underconstrained Persistence]
Let $k \ge 1$ and $\alpha > \beta \ge 0$ be fixed constants. 
Suppose the node count $N(n)$ and the naive constraint count $C(n)$ satisfy
\[
N(n) = \alpha n^k + O(n^{k-1}), 
\qquad
C(n) = \beta n^k + O(n^{k-1}).
\]

Assume, under the random-labeling heuristic, that each constraint is
satisfied with probability at least
\[
\frac{1}{\gamma N(n)}
\]
for some constant $\gamma>0$ independent of $n$. Then the expected number
of admissible labelings is bounded below by a quantity tending to infinity.

Then
\[
\frac{N(n)!}{(\gamma N(n))^{\,C(n)}} \longrightarrow \infty
\qquad \text{as } n \to \infty.
\]
\end{proposition}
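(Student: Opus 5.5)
We mirror the proof of the Naive Overconstraint Collapse proposition and work with logarithms throughout. By Stirling's formula,
\[
\log\!\left(\frac{N!}{(\gamma N)^{C}}\right)
= N\log N - N - C\log N - C\log\gamma + O(\log N)
= (N-C)\log N - N - C\log\gamma + O(\log N).
\]
The task is therefore to show that this expression tends to $+\infty$. The positive term $(N-C)\log N$ must dominate the linear terms $N$ and $C\log\gamma$, both of which are only $O(n^k)$.

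First, we control $N-C$. From the hypotheses,
\[
N(n)-C(n) = (\alpha-\beta)n^k + O(n^{k-1}).
\]
Since $\alpha>\beta$, there is an $n_0$ such that $N-C \ge \tfrac{\alpha-\beta}{2}\,n^k$ for all $n\ge n_0$.

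Second, we bound the remaining terms. We have $N=\Theta(n^k)$, so $\log N = k\log n + O(1)\to\infty$. Also $N = O(n^k)$ and $|C\log\gamma| \le |\log\gamma|\,(\beta n^k + O(n^{k-1})) = O(n^k)$; note that $\log\gamma$ may have either sign. The error term $O(\log N)$ is $O(\log n)$.

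Combining these bounds gives
\[
\log\!\left(\frac{N!}{(\gamma N)^{C}}\right) \ge \tfrac{\alpha-\beta}{2}\,n^k(k\log n + O(1)) - O(n^k) - O(\log n).
\]
This equals $\Theta(n^k\log n)$, which tends to $+\infty$, and exponentiating yields the stated limit.

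For the heuristic sentence about expected admissible labelings, we use the assumption that each of the $C(n)$ constraints is satisfied independently with probability at least $1/(\gamma N)$. Under this assumption the expected count is at least $N!\,(\gamma N)^{-C}$, so it is bounded below by a quantity tending to infinity. This part is only the definition of the heuristic model; it is not a rigorous probabilistic claim.

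The main obstacle is small. One must make sure the gap $(\alpha-\beta)n^k$ carries the factor $\log N$, giving order $n^k\log n$, while the $-N$ term and the $\gamma$-dependent term are only of order $n^k$ and so cannot cancel it. One must also handle the case $\gamma<1$, where $-C\log\gamma$ is positive and so only helps. Some care is needed with $k\ge1$ so that the $O(\log N)$ Stirling remainder is negligible, but it clearly is.
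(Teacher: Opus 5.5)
Your proposal is correct and follows the paper's own argument: apply Stirling's formula, then observe that $(N-C)\log N = \Theta(n^k\log n)$ dominates the terms $N$ and $C\log\gamma$, which are only $O(n^k)$. Your explicit lower bound $N-C \ge \tfrac{\alpha-\beta}{2}n^k$ for large $n$, and your remark that $\log\gamma$ may have either sign, are minor tightenings of that same argument.
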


\begin{proof}
By Stirling's formula,
\[
\log(N!) = N \log N - N + O(\log N).
\]
Hence
\[
\log\!\left(\frac{N!}{(\gamma N)^{C}}\right)
=
\log(N!) - C \log N  - C \log \gamma 
=
(N-C)\log N - N + O(\log N) - C \log \gamma .
\]

Since
\[
N(n)-C(n)=(\alpha-\beta)n^k+O(n^{k-1})=\Theta(n^k),
\]
we have
\[
(N-C)\log N=\Theta(n^k\log n),
\]
whereas
\[
N=\Theta(n^k),
\qquad
C\log\gamma=O(n^k).
\]
Therefore,
\[
(N-C)\log N - N - C\log\gamma + O(\log N)
=
\Theta(n^k\log n)-O(n^k)
\longrightarrow +\infty.
\]
Thus the expected number of admissible labelings tends to infinity.
\end{proof}

This comparison with other magic figures, together with the heuristic
enumeration estimates above, suggests that trihexagonal magic figures
belong to a critical regime in which naive random heuristics predict the
disappearance of solutions, while the remaining structural degrees of
freedom leave open the possibility of infinitely many admissible labelings,
as seen in other highly constrained magic figures.

Determining which of these two behaviors actually occurs requires a deeper 
understanding of the algebraic compatibility and global symmetry of the 
underlying constraints.

\section{Conjecture}

\begin{conjecture}[Eventual nonexistence under balanced constraints]
For trihexagonal magic figures (OEIS~A391236)\cite{OEISA391236}, the number of solutions
vanishes for all sufficiently large $n$.

The same phenomenon is expected to occur for the magic triangle system
with constant $2\times2$ subtriangle sums (OEIS~A375416)\cite{OEISA375416}.
\end{conjecture}

\begin{conjecture}[Persistence below the obstruction threshold]
Apart from explicit obstructions such as parity constraints,
solutions exist for all admissible values of $n$ below the threshold
at which overconstraint takes effect.
\end{conjecture}

If the second conjecture holds, then the disappearance of solutions is
expected to occur through a genuine satisfiability threshold: there are
neither sporadic missing orders below the threshold nor isolated
solvable orders after the phase transition.

Among triangularly symmetric magic-figure systems with asymptotic
constraint density 1, the trihexagonal family counted by OEIS~A391236
appears to be a more favorable testbed than the compact magic-triangle
system OEIS~A375416 for investigating eventual nonexistence.

The reason is twofold. First, relative to the order (and likewise to the
number of nodes), the observed number of solutions in A391236 is much
smaller than in A375416. Second, A391236 admits immediate congruence-based
exclusions: in particular, the currently recorded data indicate that
$a(4m)=a(4m-1)=0$, so many values of $n$ may be discarded without
computation. By contrast, the known values of A375416 grow rapidly at
small orders, making exhaustive exploration substantially more expensive.

For this reason, A391236 provides a particularly convenient experimental
setting for testing the broader hypothesis that highly symmetric
region-based magic figures with asymptotic constraint density 1
eventually cease to exist.

\section{Conclusion}

The problem studied in this paper belongs naturally to the class of
combinatorial designs arising from triangular and hexagonal tilings,
where constraints are imposed on local regions rather than along
individual edges or lines.

Beyond the specific constructions presented here, such systems provide a
useful framework for investigating asymptotically balanced constraint
regimes, in which the number of constraints grows at the same rate as the
number of variables. As shown by the naive counting argument, these
regimes are predicted to collapse; however, the examples considered in
this paper demonstrate that the actual behavior depends crucially on the
structural interaction of constraints.

In particular, families such as the trihexagonal magic figures
(OEIS~A391236) and the subtriangle-based magic triangles
(OEIS~A375416) suggest that one may study the existence of solutions as
$n$ increases, and investigate whether a phase transition from existence
to nonexistence occurs.

From this perspective, the present problem is of independent interest not
only as a specific construction, but also as a testbed for exploring
threshold phenomena in highly symmetric, region-based combinatorial
designs.

In this sense, the present construction may be viewed as a concrete model
for studying phase transition phenomena in combinatorial design problems
arising from regular tilings. By examining the existence of solutions as
$n$ increases within a fixed geometric framework, one can empirically and
theoretically investigate the boundary between persistent solvability and
eventual overconstraint.

\appendix

\section*{Appendix}
\addcontentsline{toc}{section}{Appendix}

\section{General Construction of Magic Quadrangles for Arbitrary Order \(n\) (OEIS A355256)}

A construction establishing the existence of magic quadrangles(OEIS A355256),\cite{OEISA355256,JeongQuadrangle}  has previously appeared in an implicit, example-based form in Dairyong Jeong's \textit{Susja Nori}\cite{JeongSite}, where separate constructions for odd and even orders are presented \cite{JeongOdd,JeongEven}.

As the original source is no longer readily accessible and no explicit formulation was given, we record here a precise and systematic description.

\begin{proposition}[constuction of magic quadrangle of odd order]
Let $n \ge 1$, and index the array by $0 \le i,j \le n-1$. Define
\[
i^* =
\begin{cases}
i, & \text{if } j \equiv 0 \pmod{2},\\
n-1-i, & \text{if } j \equiv 1 \pmod{2},
\end{cases}
\qquad
j^* =
\begin{cases}
j, & \text{if } i \equiv 0 \pmod{2},\\
n-1-j, & \text{if } i \equiv 1 \pmod{2}.
\end{cases}
\]
Then the array $A=(A_{i,j})$ given by
\[
A_{i,j} = n i^* + j^* + 1
\]
is exactly the array obtained from the construction described in OEIS~A355256.
\end{proposition}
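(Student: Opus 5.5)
The plan is to make the comparison concrete. First I will fix a precise description of the construction recorded in OEIS~A355256 (after Jeong's \emph{Susja Nori}), and then show that it outputs exactly $A_{i,j}=n i^*+j^*+1$. Since the source gives the construction only by example, I will formalize it as a composition of two operations on the row-major array $B_{i,j}=ni+j+1$, $0\le i,j\le n-1$.

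\begin{enumerate}
\item[(R)] Reverse every odd-indexed row.
\item[(C)] Reverse every odd-indexed column.
\end{enumerate}

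I will check this reading against the small odd examples reproduced in the OEIS entry ($n=3,5$) before proceeding. This is the step I expect to be the main obstacle, because the "construction described in OEIS~A355256" is only given by examples. My fallback is to state explicitly that the proposition identifies $A$ with the array obtained by (R) followed by (C), and to verify that identification entry by entry for $n=3$ and $n=5$ against the published arrays.

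Next I will write each operation as a permutation of positions. Operation (R) sends the entry at $(i,j)$ to $(i,\rho_i(j))$, where $\rho_i(j)=n-1-j$ if $i$ is odd and $\rho_i(j)=j$ otherwise. Operation (C) acts by $(i,j)\mapsto(\kappa_j(i),j)$, with $\kappa_j$ defined analogously. The resulting array is therefore $A'_{i,j}=B_{\kappa_j(i),\,\rho_{\kappa_j(i)}(j)}$.

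The key observation is that for odd $n$, the map $x\mapsto n-1-x$ preserves parity, since $n-1$ is even. Hence $\kappa_j(i)\equiv i \pmod 2$, so $\rho_{\kappa_j(i)}=\rho_i$. It follows that $A'_{i,j}=n\,\kappa_j(i)+\rho_i(j)+1=n i^*+j^*+1=A_{i,j}$, because $\kappa_j(i)=i^*$ and $\rho_i(j)=j^*$ by the definitions in the statement. The same parity fact shows that (R) and (C) commute, so the result does not depend on the order in which Jeong's two reversals are performed. This removes any ambiguity in the source description.

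Finally, I will note that $(i,j)\mapsto(i^*,j^*)$ is an involution of the grid, so $A$ is a bijection onto $\{1,\dots,n^2\}$. This confirms that $A$ is a genuine labeling of the kind counted in A355256.
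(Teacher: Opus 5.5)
Your argument is correct for odd $n$, which is the intended case, but it reaches the identification by a different route than the paper. The paper takes Jeong's construction to \emph{be} the class-wise transformation of $B_{i,j}=ni+j+1$ according to the parities of $(i,j)$. It then observes that the case split defining $(i^*,j^*)$ restates that description, so its identification step is essentially definitional. It also adds a separate proof that every unit quadrangle has sum $2n^2+2$. You instead model the construction as two global operations, reversing odd rows and reversing odd columns, and compose them. The only nontrivial input you need is that $x\mapsto n-1-x$ preserves parity when $n$ is odd. From that one fact you get the commutativity of the two reversals, the involution property of $(i,j)\mapsto(i^*,j^*)$, and hence bijectivity. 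It also shows exactly where oddness is used, which the paper's hypothesis $n\ge 1$ hides: for $n=2$, both $(0,1)$ and $(1,0)$ map to $(1,1)$, so $A$ is not a permutation array. Your derivation also settles the case assignment mechanically. The formula acts by a $180^\circ$ rotation on the odd--odd class and by a left--right reflection on the class with $i$ odd and $j$ even. The paper's wording has these backwards, attaching the rotation to the latter class and a reflection to the former.

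One correction: your last sentence concludes from bijectivity alone that $A$ is ``a genuine labeling of the kind counted in A355256''. That conclusion also requires the constant $2\times2$ sums. The paper's check is two lines. In any unit quadrangle, the two cells of a row lie in columns of opposite parity, so their $i^*$ values sum to $n-1$. The two cells of a column lie in rows of opposite parity, so their $j^*$ values sum to $n-1$. The total is therefore $n\cdot 2(n-1)+2(n-1)+4=2n^2+2$.
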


\begin{proof}
The construction of OEIS~A355256 is obtained by applying reflections of the base array
\[
B_{i,j} = ni + j + 1
\]
according to the parity of $(i,j)$: entries with both indices even remain unchanged, entries with both indices odd are reflected horizontally, entries with $j$ odd are reflected vertically, and entries with $i$ odd are rotated by $180^\circ$.

The definition of $i^*$ and $j^*$ encodes precisely these four cases, and therefore produces the same array.
\end{proof}

\begin{proof}
It suffices to verify the vertex sum of each unit quadrangle.  Consider the
unit quadrangle with vertices
\[
(i,j),\quad (i,j+1),\quad (i+1,j),\quad (i+1,j+1).
\]
Since $j$ and $j+1$ have opposite parity, we have
\[
i^*(i,j)+i^*(i,j+1)=n-1
\]
and
\[
i^*(i+1,j)+i^*(i+1,j+1)=n-1.
\]
Similarly, since $i$ and $i+1$ have opposite parity,
\[
j^*(i,j)+j^*(i+1,j)=n-1
\]
and
\[
j^*(i,j+1)+j^*(i+1,j+1)=n-1.
\]
Therefore
\[
\begin{aligned}
&A_{i,j}+A_{i,j+1}+A_{i+1,j}+A_{i+1,j+1}\\
&=n\cdot 2(n-1)+2(n-1)+4\\
&=2n^2+2.
\end{aligned}
\]
Hence every unit quadrangle has the same vertex sum.
\end{proof}

\begin{proposition}[Explicit form of the even-order example]
Let $k\ge 1$, and index the entries of a $2k\times 2k$ array by
\[
0\le i,j\le 2k-1.
\]
Write
\[
a=\left\lfloor \frac{i}{2}\right\rfloor,\qquad
b=\left\lfloor \frac{j}{2}\right\rfloor .
\]
Then the array $A=(A_{i,j})$ is given by
\[
A_{i,j}=
\begin{cases}
kb+a+1, & i\equiv 0\pmod 2,\ j\equiv 0\pmod 2,\\[4pt]
3k^2-(kb+a), & i\equiv 0\pmod 2,\ j\equiv 1\pmod 2,\\[4pt]
4k^2-(kb+a), & i\equiv 1\pmod 2,\ j\equiv 0\pmod 2,\\[4pt]
k^2+kb+a+1, & i\equiv 1\pmod 2,\ j\equiv 1\pmod 2.
\end{cases}
\]
In particular, the four parity classes occupy the four disjoint intervals
\[
\{1,\dots,k^2\},\quad
\{k^2+1,\dots,2k^2\},\quad
\{2k^2+1,\dots,3k^2\},\quad
\{3k^2+1,\dots,4k^2\}.
\]
\end{proposition}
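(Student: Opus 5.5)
Since the proposition asserts both an explicit formula and a property of it, the plan has two parts. First, the values form a permutation of $\{1,\dots,4k^2\}$ and the parity classes occupy the stated intervals. Second, presumably as in the odd case, every unit quadrangle has a constant vertex sum.

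\textbf{The bijection.} For fixed parities of $(i,j)$, the pair $(a,b)$ ranges bijectively over $\{0,\dots,k-1\}^2$. Hence $m=kb+a$ ranges bijectively over $\{0,\dots,k^2-1\}$.
\begin{itemize}
\item The case with $i,j$ even gives $m+1\in\{1,\dots,k^2\}$.
\item The case with $i,j$ odd gives $k^2+m+1\in\{k^2+1,\dots,2k^2\}$.
\item The case $i$ even, $j$ odd gives $3k^2-m\in\{2k^2+1,\dots,3k^2\}$.
\item The case $i$ odd, $j$ even gives $4k^2-m\in\{3k^2+1,\dots,4k^2\}$.
\end{itemize}
These intervals are disjoint and cover $\{1,\dots,4k^2\}$, so $A$ is a bijection onto $\{1,\dots,4k^2\}$. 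This also proves the ``in particular'' clause.

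\textbf{The magic property.} Consider a unit quadrangle with top-left corner $(i,j)$. It contains exactly one entry from each parity class. I will split into four cases according to the parities of $i$ and $j$. In each case I record the $(a,b)$ values of the four entries.
\begin{itemize}
\item If $i$ and $j$ are both even, all four entries share the same $(a,b)$. The sum is then $(m+1)+(k^2+m+1)+(3k^2-m)+(4k^2-m)=8k^2+2$.
\item If $i$ is even and $j$ is odd, the entries with column $j+1$ have $b+1$ in place of $b$. These are the even-even and odd-even classes, which enter with coefficients $+1$ and $-1$, so the shifts of $\pm k$ cancel.
\item If $i$ is odd and $j$ is even, the entries in row $i+1$ have $a+1$. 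These are the even-even and even-odd classes, again with opposite signs, so the shifts of $\pm1$ cancel.
\item If $i$ and $j$ are both odd, the shifts combine in the same way: $+1$ on the even-even and $-1$ on the even-odd class, $-k$ on the odd-even and $+k$ on the even-even class. Everything cancels.
\end{itemize}
So every unit quadrangle has sum $8k^2+2=2n^2+2$ with $n=2k$. This agrees with the constant found in the odd case.

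The main obstacle is the bookkeeping in the mixed-parity cases. There one must track which parity class picks up the incremented $a$ or $b$, and confirm that each increment lands on a pair of classes with opposite signs. The structural reason it works is that the classes with positive coefficient on $m$ (the even-even and odd-odd classes) are exactly diagonal-complementary to the classes with negative coefficient. Each unit step in $i$ or $j$ therefore moves one positive-signed and one negative-signed entry together. I will check this carefully, case by case, with the table of $(a,b)$ offsets.

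If the intended claim is instead only that this formula reproduces Jeong's even example, as in the odd-order proposition, the proof reduces to the first part plus matching the four parity-class blocks with the described block placement.
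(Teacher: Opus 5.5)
Your proof is correct and follows the same route as the paper. The paper also writes each $2\times2$ sum as $8k^2+2+\bigl(t_{00}+t_{11}-(t_{01}+t_{10})\bigr)$ and notes that an increment of $a$ lands on the two even-row entries, and an increment of $b$ on the two even-column entries, one on each side of $t_{00}+t_{11}=t_{01}+t_{10}$; your four-case table makes this cancellation explicit, and you additionally verify the bijection onto $\{1,\dots,4k^2\}$ and the interval claim, which the paper states without checking.
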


\begin{proof}

For any \(2\times2\) subsquare, regardless of its position in the
array, label its four entries according to the parities of their row
and column indices modulo \(2\).  More precisely, if an entry lies in
row \(i\) and column \(j\), we assign it the type
\[
(\overline{i},\overline{j})
\in\{00,01,10,11\},
\]
where
\[
\overline{i}\equiv i\pmod 2,
\qquad
\overline{j}\equiv j\pmod 2.
\]
Thus every \(2\times2\) subsquare contains exactly one entry of each
type \(00,01,10,11\).

For each such entry, write
\[
a=\left\lfloor \frac{i}{2}\right\rfloor,
\qquad
b=\left\lfloor \frac{j}{2}\right\rfloor,
\qquad
t=kb+a.
\]
According to the parity type, we denote these values by
\[
t_{00},\quad t_{01},\quad t_{10},\quad t_{11}.
\]
That is, \(t_{rs}\) denotes the quantity \(kb+a\) corresponding to the
unique entry of parity type \(rs\) in the given \(2\times2\)
subsquare.

With this notation, the sum of the four entries in any \(2\times2\)
subsquare can be written as
\[
(t_{00}+1)+(3k^2-t_{01})+(4k^2-t_{10})+(k^2+t_{11}+1).
\]
Collecting the constant and variable parts, this becomes
\[
8k^2+2+\bigl(t_{00}+t_{11}-(t_{01}+t_{10})\bigr).
\]
Thus it suffices to prove that
\[
t_{00}+t_{11}=t_{01}+t_{10}.
\]

If the two entries with \(i\equiv 0\pmod 2\) have their \(a\)-values
larger by \(1\), or similarly if the two entries with
\(j\equiv 0\pmod 2\) have their \(b\)-values larger by \(1\), then the
corresponding change appears once on each side of
\[
t_{00}+t_{11}=t_{01}+t_{10},
\]
so the equality still holds.

The same remains true when both phenomena occur simultaneously.  In
each case, a direct calculation shows that
\[
t_{00}+t_{11}=t_{01}+t_{10}.
\]

\end{proof}

We additionally note that the phenomenon observed for the trihexagonal
magic figure—namely, that the magic-sum invariance extends from unit
polygons to larger blocks of the same shape—also appears in the magic
quadrangle.

\begin{proposition}
In the magic quadrangle \(\mathrm{A355256}\),\cite{OEISA355256}
the corner sum over any \((2r+1)\times(2s+1)\) rectangular block is constant.
Here the size is counted by side length, so the smallest rectangular
block is regarded as \(1\times1\).
\end{proposition}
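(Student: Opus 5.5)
The plan is to show that the statement uses only the defining property of a magic quadrangle: every unit quadrangle has the same vertex-sum. Call this common value $S$; for the odd-order construction above, $S=2n^2+2$. I would not use the explicit formulas for $A_{i,j}$ at all. The idea mirrors the trihexagonal argument of Lemma~\ref{lem:triangle-sum-invariance}: take a signed combination of unit-face constraints in which every non-corner vertex cancels.

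Fix a block with corners
\[
(i,j),\quad (i,j+2s+1),\quad (i+2r+1,j),\quad (i+2r+1,j+2s+1).
\]
For $0\le p\le 2r$ and $0\le q\le 2s$, let $Q_{p,q}$ be the vertex-sum of the unit quadrangle whose upper-left vertex is $(i+p,j+q)$. By hypothesis $Q_{p,q}=S$ for every such pair. I will evaluate
\[
\Sigma=\sum_{p=0}^{2r}\sum_{q=0}^{2s}(-1)^{p+q}Q_{p,q}
\]
in two ways.

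\emph{First evaluation, using the constraints.} Since every $Q_{p,q}$ equals $S$,
\[
\Sigma = S\Bigl(\sum_{p=0}^{2r}(-1)^p\Bigr)\Bigl(\sum_{q=0}^{2s}(-1)^q\Bigr) = S\cdot 1\cdot 1 = S.
\]
Each factor equals $1$ because $2r+1$ and $2s+1$ are odd.

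\emph{Second evaluation, by expanding into vertex labels.} Consider the entry at offset $(u,v)$ with $0\le u\le 2r+1$ and $0\le v\le 2s+1$. It lies in $Q_{p,q}$ exactly when $p\in\{u-1,u\}\cap[0,2r]$ and $q\in\{v-1,v\}\cap[0,2s]$. Its total coefficient in $\Sigma$ therefore factors as $w(u)\,w'(v)$, where
\[
w(u)=\sum_{p\in\{u-1,u\}\cap[0,2r]}(-1)^p,
\]
and $w'$ is defined in the same way with $2s$ in place of $2r$.

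The computation of $w$ splits into cases:
\begin{itemize}
\item For $1\le u\le 2r$, both values $p=u-1$ and $p=u$ are allowed. They have opposite signs, so $w(u)=0$.
\item For $u=0$, only $p=0$ is allowed, so $w(0)=1$.
\item For $u=2r+1$, only $p=2r$ is allowed, so $w(2r+1)=(-1)^{2r}=1$.
\end{itemize}
The same cases give $w'(0)=w'(2s+1)=1$ and $w'(v)=0$ for $1\le v\le 2s$. Hence the four corners of the block carry coefficient $1$. Every other entry, whether interior or on an edge, has at least one vanishing factor and so carries coefficient $0$. This gives $\Sigma=$ (corner sum of the block).

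Comparing the two evaluations, the corner sum of every $(2r+1)\times(2s+1)$ block equals $S$, independent of the block's position and of $r,s$.

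The only delicate point is the boundary case $u=2r+1$ (and likewise $v=2s+1$). It gives $+1$ rather than $-1$ precisely because $2r$ is even, and this is where oddness of the side lengths enters. For an even side length the last corner would receive $-1$, and the first sum $\sum_p(-1)^p$ would vanish. In that case the same computation yields only a relation of the form ``sum of one diagonal pair of corners equals sum of the other'', the analogue of Theorem~\ref{thm:parallelogram-invariance}, rather than a constant corner sum. I would remark on this in passing to explain the parity restriction in the statement.
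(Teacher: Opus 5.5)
Your proof is correct, and it organizes the same cancellation differently from the paper. The paper argues by alternation in two stages. First, for a strip one unit cell thick, the two-entry pair sums across the strip alternate between $K$ and $M-K$ as one moves along it, so an odd-length strip has corner sum $M$. Then it stacks $2s+1$ such strips and notes that the endpoint-pair sums on successive long sides alternate in the same way, so an odd number of strips again gives corner sum $M$.

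Your checkerboard sum $\Sigma=\sum(-1)^{p+q}Q_{p,q}$ is this two-stage telescoping unrolled into one identity. The factorization of the coefficient as $w(u)\,w'(v)$ is the product of the two one-dimensional alternations. Your version gives an explicit certificate that handles every edge and interior entry at once. It also makes the role of oddness transparent, since each factor $\sum(-1)^p$ equals $1$, and it matches the signed face-combination used in Lemma~\ref{lem:triangle-sum-invariance}. The paper's version is more hands-on and exposes the intermediate fact that pair sums alternate, which is useful in its own right.

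One small correction to your closing remark, which does not affect the proof. If exactly one side length is even, the coefficients are $+1$ on the two corners of one side and $-1$ on the two corners of the opposite side. So you get equality of opposite-side corner pairs, not of diagonal pairs. The diagonal relation, the analogue of Theorem~\ref{thm:parallelogram-invariance}, arises only when both side lengths are even.
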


\begin{proof}
Let \(M\) denote the common sum of every \(2\times2\) subsquare.

First consider the case where one side has length \(1\).  Suppose that
two adjacent entries in a row have sum \(K\).  Since each \(2\times2\)
subsquare has sum \(M\), the two adjacent entries immediately above or
below them have sum \(M-K\).  Moving one step further in the same
direction gives sum \(K\) again.  Hence these adjacent two-entry sums
alternate between \(K\) and \(M-K\).

It follows that the sum of any \(1\times(2r+1)\) or
\((2r+1)\times1\) block is independent of its position.  

Now consider a general \((2r+1)\times(2s+1)\) rectangular block.
Decompose it into \(2s+1\) parallel strips of size
\((2r+1)\times1\). Applying the same alternating principle, the sums
of the two corner entries on the longer side of these strips alternate
in the same way.

Since the number of strips is odd, the total corner sum of the whole
rectangle is equal to that of the original \((2r+1)\times1\) strip.
Thus every \((2r+1)\times(2s+1)\) rectangular block has the same sum.
\end{proof}

\section{Computational verification of Trihexagonal magic figure for n=2 and 5}

The complete computational materials for Trihexagonal magic figure\cite{OEISA391236} are publicly available through the
author’s GitHub repository

\url{https://github.com/gwahak/mathematics}

Links to the specific notebook files used for the enumeration are also
provided in the OEIS entry for the corresponding sequence
{OEIS A391236}\cite{OEISA391236}.

\subsection{Computational verification for order \(n=2\)}

For the smallest nontrivial case \(n=2\), the existence and exact number
of solutions can be verified by a direct exhaustive search.

We denote the nine vertices by
\[
\{a_{00},a_{01},a_{02},a_{03},a_{11},a_{13},a_{22},a_{23},a_{33}\},
\]
arranged as
\[
\begin{array}{ccccccc}
&&& a_{00} \\
&& a_{01} && a_{11} \\
& a_{02} &&&& a_{22} \\
a_{03} && a_{13} && a_{23} && a_{33}
\end{array}
\]

and assign the numbers \(1,2,\dots,9\) bijectively.

The magic conditions require that all unit triangles and the outer
triangle have the same sum:
\[
a_{22}+a_{23}+a_{33}=15,
\]
\[
a_{00}+a_{11}+a_{01}=15,
\]
\[
a_{02}+a_{03}+a_{13}=15,
\]
\[
a_{00}+a_{33}+a_{03}=15.
\]

To avoid counting symmetric duplicates arising from reflection, we impose
the canonical ordering
\[
a_{33} > a_{00} > a_{03}.
\]

The following short Python program performs the complete enumeration.

\begin{verbatim}
import itertools

count = 0

for a00, a01, a02, a03, a11, a13, a22, a23, a33 
                    in itertools.permutations(range(1, 10), 9):

    if a22 + a23 + a33 != 15:
        continue
    if a00 + a11 + a01 != 15:
        continue
    if a02 + a03 + a13 != 15:
        continue
    if a00 + a33 + a03 != 15:
        continue

    if not (a33 > a00 > a03):
        continue

    count += 1

print(count)
\end{verbatim}

The program returns \textbf{for order 2, number of solutions = 80}

\subsection{Computational verification for order \(n=5\)}

\lstset{
  basicstyle=\ttfamily\footnotesize,
  breaklines=true,
  breakatwhitespace=false,
  columns=fullflexible,
  keepspaces=true,
  frame=single,
  numbers=left,
  numberstyle=\tiny,
  xleftmargin=1em
}
\begin{lstlisting}[language=Python,caption={Enumeration code for order \(n=5\).}]

i=0
for a57 in range(24,46):
  for a24 in range(int((69-a57)/2)+1, min(a57,69-a57)):
    a27 = 69 - a57 - a24
    for a45 in set(range(1,46))-{a24,a27,a57}:
      for a48 in set(range(1,46))-{a24,a27,a57,a45}:
        a15=69-a45-a48
        if a15 in set(range(1,46))-{a24,a27,a57,a45,a48}:
          for a46 in set(range(1,46))-{a24,a27,a57,a45,a48,a15}:
            a47=69-a46-a57
            a37=69-a47-a48
            a26=69-a37-a27
            a25=69-a26-a15
            a35=69-a25-a24
            if {a46,a47,a37,a26,a25,a35}.issubset(set(range(1,46))-{a24,a27,a57,a45,a48,a15}):
              if len({a46,a47,a37,a26,a25,a35})==6:
                set33=set(range(1,46))-{a24,a27,a57,a45,a48,a15,a46,a47,a37,a26,a25,a35}
                for a67 in set33:
                  a68=69-a67-a57
                  if a67!=a68 and a68 in set33:
                    for a66 in set33-{a67,a68}:
                      a77=69-a66-a67
                      a55=138-a66-a67-a57-a46-a45
                      a44=69-a45-a55
                      if {a66,a77,a55,a44}.issubset(set33-{a67,a68}):
                        if len({a66,a77,a55,a44})==4:
                          for a79 in set33-{a67,a68,a66,a77,a55,a44}:
                            a99=a77+a67+a68+a79-69
                            a69=69-a79-a68
                            a59=138-a69-a68-a57-a47-a48
                            a49=69-a48-a59
                            if {a79,a99,a69,a59,a49}.issubset(set33-{a67,a68,a66,a77,a55,a44}):
                              if len({a79,a99,a69,a59,a49})==5:
                                for a28 in set33-{a67,a68,a66,a77,a55,a44}-{a79,a99,a69,a59,a49}:
                                  a39=138-a28-a27-a37-a48-a49
                                  a29=69-a39-a28
                                  a17=69-a27-a28
                                  if {a28,a39,a29,a17}.issubset(set33-{a67,a68,a66,a77,a55,a44}-{a79,a99,a69,a59,a49}):
                                    if len({a28,a39,a29,a17})==4:
                                      for a23 in set33-{a67,a68,a66,a77,a55,a44}-{a79,a99,a69,a59,a49}-{a28,a39,a29,a17}:
                                        a33=138-a23-a24-a35-a45-a44
                                        a22=69-a23-a33
                                        a13=69-a23-a24
                                        if {a23,a33,a22,a13}.issubset(set33-{a67,a68,a66,a77,a55,a44}-{a79,a99,a69,a59,a49}-{a28,a39,a29,a17}):
                                          if len({a23,a33,a22,a13})==4:
                                            for a04 in set33-{a67,a68,a66,a77,a55,a44}-{a79,a99,a69,a59,a49}-{a28,a39,a29,a17}-{a23,a33,a22,a13}:
                                              a05=69-a15-a04
                                              a03=138-a04-a15-a25-a24-a13
                                              a02=69-a13-a03
                                              a00=a22+a23+a13+a02-69
                                              a06=138-a05-a15-a26-a27-a17
                                              a07=69-a06-a17
                                              a09=a07+a17+a28+a29-69
                                              if {a00,a02,a03,a04,a05,a06,a07,a09}.issubset(set33-{a67,a68,a66,a77,a55,a44}-{a79,a99,a69,a59,a49}-{a28,a39,a29,a17}-{a23,a33,a22,a13}):
                                                if len({a00,a02,a03,a04,a05,a06,a07,a09})==8:
                                                  set6=set33-{a67,a68,a66,a77,a55,a44}-{a79,a99,a69,a59,a49}-{a28,a39,a29,a17}-{a23,a33,a22,a13}-{a00,a02,a03,a04,a05,a06,a07,a09}
                                                  for a88 in set6:
                                                    a89=69-a88-a99
                                                    if a89 in set6-{a88} and a88>a89:
                                                      for a08 in set6-{a88,a89}:
                                                        a19= 69-a08-a09
                                                        if a19 in set6-{a88,a89,a08} and a08>a19:
                                                          for a01 in set6-{a88,a89,a08,a19}:
                                                            a11=69-a01-a00
                                                            if a11 in set6-{a88,a89,a08,a19,a01} and a01>a11:
                                                              i+=1
                                                              print(i,"th: solution:", a99,a88,a89,a77,a79,a66,a67,a68,a69,a55,a57,a59,a44,a45,a46,a47,a48,a49,a33,a35,a37,a39,a22,a23,a24,a25,a26,a27,a28,a29,a11,a13,a15,a17,a19,a00,a01,a02,a03,a04,a05,a06,a07,a08,a09)
\end{lstlisting}

\end{document}